\pgfplotsset{compat=newest}
\newtheorem{algorithm}{\textbf{\textup{Algorithm}}}
\newtheorem{lem}{Lemma}
\newtheorem{thm}{Theorem}
\newtheorem{definition}{Definition}
\newtheorem{remark}{Remark}
\newcommand{\R}{\mathbb{R}}
\newcommand{\N}{\mathbb{N}}
\newcommand{\U}{\mathcal{U}}
\DeclareMathOperator*{\argmin}{arg\,min}
\DeclareMathOperator*{\hyp}{hypo}
\begin{document}

%
\runningtitle{A Nonconvex Proximal Splitting Algorithm under Moreau-Yosida Regularization}

%
\runningauthor{Laude, Wu, Cremers}

\twocolumn[
%
\aistatstitle{A Nonconvex Proximal Splitting Algorithm \\
under Moreau-Yosida Regularization}
%
\aistatsauthor{ Emanuel Laude \And  Tao Wu \And Daniel Cremers }
\aistatsaddress{Department of Informatics, Technical University of Munich, Germany} 
]

\begin{abstract}
We tackle highly nonconvex, nonsmooth composite optimization problems whose objectives comprise a Moreau-Yosida regularized term. Classical nonconvex proximal splitting algorithms, such as nonconvex ADMM, suffer from lack of convergence for such a problem class. To overcome this difficulty, in this work we consider a lifted variant of the Moreau-Yosida regularized model and propose a novel multiblock primal-dual algorithm that intrinsically stabilizes the dual block. We provide a complete convergence analysis of our algorithm and identify respective optimality qualifications under which stationarity of the original model is retrieved at convergence. Numerically, we demonstrate the relevance of Moreau-Yosida regularized models and the efficiency of our algorithm on robust regression as well as joint feature selection and semi-supervised learning.
\end{abstract}

\section{Introduction} \label{sec:intro}
We are interested in the solution of the nonconvex, Moreau-Yosida regularized consensus minimization problem
\begin{equation}\label{eq:consensus_prob_regularized}
\begin{aligned}
\underset{\substack{u\in \R^n, ~v \in \R^m}}{\text{minimize}}&& \textstyle e_{\lambda} f (v) + g(u) \\
\text{subject to}
&& A u = v,
\end{aligned}
\end{equation}
where $e_{\lambda} f$ is the \emph{Moreau envelope} \cite{moreau1965proximite} of $f$ with parameter $\lambda > 0$.

\begin{figure}[htb]
\begin{subfigure}[b]{0.47\linewidth}
        \centering
        \includegraphics[width=0.9\textwidth]{./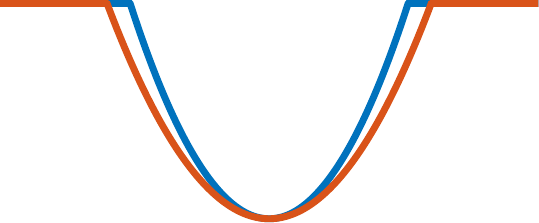}
\end{subfigure}
\hfill
\begin{subfigure}[b]{0.47\linewidth}
        \centering
        \includegraphics[width=0.9\textwidth]{./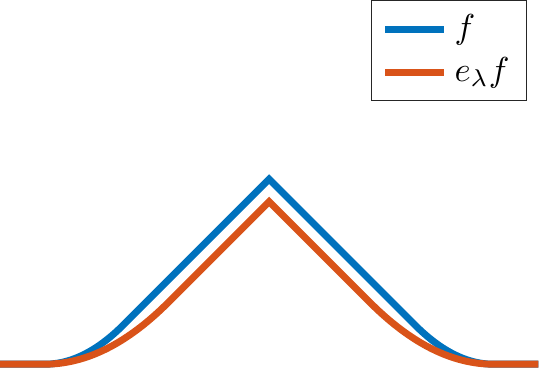}
\end{subfigure}
\caption{Illustration of nonconvex, nonsmooth Moreau envelopes $e_\lambda f$ (red curves). The two depicted piecewise quadratic loss functions $f$ (blue curves), are invariant under Moreau-Yosida regularization up to rescaling. Left: The Moreau envelope of a truncated quadratic loss, given as $f=\min\;\{\nu, (\cdot)^2\}$ for $\nu > 0$, is a nonsmooth, nonconvex truncated quadratic loss with rescaled slope. Right: The Moreau envelope of a symmetric Huberized hinge loss is a nonsmooth, nonconvex symmetric Huberized hinge loss with rescaled slopes of the quadratic pieces given as $e_\lambda f =\min_{\theta\in \{-1,1\}} e_\lambda (1- (\cdot) \theta)_+$.}
\label{fig:moreau_envelope}
\end{figure}

The extended real-valued functions, $f$ and $g$, are assumed to be proper, lower semi-continuous (lsc), and in general {nonconvex}. 
The block variables $u$ and $v$ are separated in the objective, but comply with a \emph{consensus constraint} for matrix $A \in \R^{m \times n}$.

To argue that model \eqref{eq:consensus_prob_regularized} is relevant in statistics and machine learning we briefly review some general properties of the Moreau envelope, formally defined as follows.
\begin{definition}[Moreau envelope and proximal mapping] For a proper, lsc function $f:\R^m \to \R \cup \{ \infty \}$ and parameter $\lambda > 0$, the Moreau envelope function $e_\lambda f$ and the proximal mapping $P_\lambda f$ are defined by
\begin{align*}
e_\lambda f(v) &:= \inf_{z\in\R^m} f(z) + \frac{1}{2\lambda} \|z-v\|^2, \\
P_\lambda f(v) &:= \argmin_{z\in\R^m} f(z) + \frac{1}{2\lambda} \|z-v\|^2.
\end{align*}
\end{definition}
For convex $f$, the Moreau envelope $e_\lambda f$ yields a convex and smooth lower approximation to $f$, cf.~\cite[Theorem 2.26]{Rockafellar-Variational-Analysis}. In contrast, for nonconvex nonsmooth $f$ the Moreau envelope $e_\lambda f$ remains nonsmooth and nonconvex in general which renders its optimization challenging.
In fact, many nonsmooth, nonconvex objectives in machine learning tasks, such as semi-supervised Huber-SVMs, or robust regression are even invariant, up to different choices of $\lambda$, under Moreau-Yosida regularization. This is exemplarily illustrated for the truncated quadratic loss \cite{she2011outlier,LiuMinimizingTQ}, and the ``symmetric Huberized hinge loss'' \cite{bennett1999semi,joachims1999transductive,collobert2006large} in Figure \ref{fig:moreau_envelope}, where Moreau-Yosida regularization preserves nonsmoothness and nonconvexity of $f$.

For $e_{\lambda} f$ smooth and $g$ nonsmooth, problems of the form \eqref{eq:consensus_prob_regularized} can be optimized by proximal splitting methods such as (accelerated) proximal gradient methods \cite{nesterov2013gradient,ochs2014ipiano}, or primal-dual algorithms such as the alternating direction method of multipliers (ADMM) \cite{glowinski1975approximation,gabay1976dual,gabay1983chapter,eckstein1992douglas,hong2016convergence,li2015global,WYZ17}.
All these algorithms typically update the block variables in a Gauss-Seidel fashion, and exploit efficient evaluation of the proximal mapping of $e_{\lambda} f$ or $g$. 
Proximal splitting methods are traditionally applied for convex optimization, in which case their convergence is guaranteed without smoothness of the objective. 
More recently, convergence of ADMM is extended to certain nonconvex settings \cite{hong2016convergence,li2015global,WYZ17}, but the convergence analysis requires, e.g., that $e_{\lambda} f$ is Lipschitz differentiable. In some sense, convexity is traded off against smoothness in a convergent algorithm. However, when both $e_{\lambda} f$ and $g$ in \eqref{eq:consensus_prob_regularized} are nonsmooth and nonconvex (as encountered in numerous machine learning applications), 
the proximal gradient method is inapplicable and the ADMM is non-convergent.


To overcome such difficulties, we propose a novel multiblock primal-dual scheme for solving problem \eqref{eq:consensus_prob_regularized}. 
Our contributions are summarized as follows:
\begin{itemize}
\item We devise a novel multiblock primal-dual scheme in Section \ref{sec:derivation}, and prove its subsequential convergence to a critical point of the \emph{lifted} representation of problem \eqref{eq:consensus_prob_regularized} in Section \ref{sec:convergence}.
\item We draw connections of our primal-dual algorithm to a fully primal coordinate descent on a quadratic penalty function; see Section \ref{sec:specialcase}. 
\item For piecewise smooth, piecewise convex functions of the form $\min_{i \in \mathcal{I}} e_\lambda f_i$ (as a subfamily of nonconvex, nonsmooth functions), 
we identify an optimality qualification related to \emph{active set}, under which a critical point of the lifted problem translates to a critical point of \eqref{eq:consensus_prob_regularized}; see Section \ref{sec:piecewiseconvex}. For a specific choice of the step size, we show that our optimality qualification is guaranteed to hold.
\item We experimentally validate our proposed algorithm on robust regression as well as joint feature selection and semi-supervised learning in Section \ref{sec:numerics}. In comparison with classical nonconvex ADMM, our method consistently performs favorably in terms of lower objective value and vanishing optimality gap.
\end{itemize}

\section{A Multiblock Primal-Dual Method} \label{sec:algorithm}
\subsection{Derivation} \label{sec:derivation}
For $e_\lambda f$ smooth, the convergence of ADMM in the nonconvex setting is shown via a monotonic decrease of the augmented Lagrangian \cite{hong2016convergence}, which serves as a Lyapunov function. As a key step in the proof of \cite{hong2016convergence}, the ascent of the augmented Lagrangian, caused by the dual update, is dominated by a sufficient descent in the primal block that is updated last. 

In order to recover convergence for nonsmooth $e_\lambda f$, we employ a lifting of the problem which yields a third primal block in the optimization. More precisely, we introduce new variables $z, w$ along with a linear constraint 
\begin{align} \label{eq:substitution}
z + w = v,
\end{align}
 and integrate the Moreau-Yosida regularization into the lifted problem:
\begin{equation}\label{eq:consensus_prob_lifted}
\begin{aligned}
\underset{\substack{u\in \R^n, ~w,z \in \R^m}}{\text{minimize}}& & f(z) + \frac{1}{2\lambda}\|w\|^2 + g(u) \\
\text{subject to}
& & A u - z - w = 0.
\end{aligned}
\end{equation}
To this lifted problem we apply the following multiblock primal-dual scheme, where the block $w$, updated last, corresponds to the smooth function $\frac{1}{2\lambda}\|\cdot\|^2$ that realizes the Moreau-Yosida regularization.
Let the augmented Lagrangian of the problem be defined as
\begin{equation} \label{eq:augm_lagrangian}
\begin{aligned} 
&\mathfrak{L}_{\rho}(u,z,w,y)=f(z) + g(u)  +\frac{1}{2\lambda}\|w\|^2 \\
&\qquad+ \langle y, Au - z -w \rangle + \frac{\rho}{2} \|A u - z - w\|^2,
\end{aligned}
\end{equation}
for $\rho > 0$. Also let $M:=\frac{1}{\sigma}I - \rho A^\top A$, which is positive definite for $\sigma\rho\|A\|^2<1$, and $\|\cdot\|_M:=\sqrt{\langle\cdot,M\cdot\rangle}$. Then our scheme is formulated as
\begin{equation}\label{eq:primal_dual}
\begin{aligned} 
u^{t+1} &\in \textstyle\argmin_{u} \mathfrak{L}_{\rho}(u,z^t,w^t,y^t) + \frac{1}{2}\|u-u^t\|_M^2, \\
z^{t+1}&\in \textstyle\argmin_{z} \mathfrak{L}_{\rho}(u^{t+1},z,w^{t},y^t), \\
w^{t+1}&=\textstyle\argmin_{w} \mathfrak{L}_{\rho}(u^{t+1},z^{t+1},w,y^t), \\
y^{t+1}&=y^t + \rho(A u^{t+1} - z^{t+1} - w^{t+1}).
\end{aligned}
\end{equation}
Motivated by \cite{chambolle2011first,lin2011linearized} we include a proximal term $\frac{1}{2}\|u-u^t\|_M^2$ to guarantee tractability of the subproblem for the $u$-update, as long as the proximal mapping of $g$ is simple.
Once rephrasing the update of $u$ in terms of a proximal mapping as $u^{t+1} = \textstyle P_{\sigma}g(u^t - \sigma A^\top(y^{t} + \rho  (A u^{t} -z^{t} - \lambda y^t)))$, one can interpret it as a proximal gradient descent step on the augmented Lagrangian.

The optimality condition for the last primal block update $0 \stackrel{!}{=} \frac{1}{\lambda} w^{t+1} - y^t - \rho(A u^{t+1} - z^{t+1} - w^{t+1})$ matches the dual update and shows that $w$ is equal to the dual variable up to scaling, i.e.,~$\lambda y^{t+1}=w^{t+1}$. Therefore, the variable $w$ can be eliminated from the algorithm, and we arrive at a compact formulation of the proposed multiblock primal-dual scheme, i.e.,~Algorithm \ref{alg:multiblock_primal_dual}, for Moreau-Yosida regularized problems.
\begin{figure}[h]
\centering
\fbox{
\begin{minipage}{0.9\linewidth}
\begin{algorithm}[multiblock primal-dual scheme] \label{alg:multiblock_primal_dual}
{\ }
Choose $\rho, \sigma$ such that $\rho\lambda > 1$ and $\sigma\rho\|A\|^2 < 1$.
For $t=1, 2, \dots$ do
\begin{align*}
u^{t+1} &\in \textstyle P_{\sigma}g\left(u^t - \sigma A^\top(y^{t} + \rho  (A u^{t} -z^{t} - \lambda y^t))\right),\\
        z^{t+1} &\in \textstyle P_{\nicefrac{1}{\rho}}f\left(A u^{t+1} + ( \nicefrac{1}{\rho}-\lambda) y^t \right), \\
	y^{t+1}&= \tfrac{1}{1 + \rho \lambda}(y^t + \rho(A u^{t+1} - z^{t+1})).
\end{align*}
\end{algorithm}
\end{minipage}
}
\end{figure}

Note that the lifted problem is equivalent to problem \eqref{eq:consensus_prob_regularized} in terms of global minimizers but in general not in terms of critical points. 
Yet, we show in Section \ref{sec:qualification} that under mild assumptions, e.g., for piecewise convex piecewise smooth $e_\lambda f$, limit points produced by this algorithm translate to critical points of the original problem \eqref{eq:consensus_prob_regularized} by reversing the substitution \eqref{eq:substitution}.
As a side remark, with $\lambda = 0$ we arrive at a proximal variant of ADMM \cite{chambolle2011first,lin2011linearized} also referred to as linearized ADMM, applied to the unregularized problem
\begin{equation}\label{eq:consensus_prob}
\begin{aligned}
\underset{\substack{u\in \R^n, ~v \in \R^m}}{\text{minimize}}&& f (v) + g(u) \\
\text{subject to}
&& A u = v.
\end{aligned}
\end{equation}

\subsection{Primal Optimization as a Special Case} \label{sec:specialcase}
Finally, we draw a connection between Algorithm \ref{alg:multiblock_primal_dual} and existing, fully primal alternating minimization schemes applied to the quadratic penalty
\begin{align} \label{eq:penalty_function}
Q(u, z) = f(z) +g(u)  + \frac{1}{2\lambda}\|A u - z\|^2,
\end{align}
that corresponds to the unregularized problem \eqref{eq:consensus_prob}.
For a non-admissible choice of the step size $\nicefrac{1}{\rho}=\lambda$, the Lagrange multiplier $y^t$ in Algorithm \ref{alg:multiblock_primal_dual} becomes obsolete and we arrive at Algorithm \ref{alg:proximal_penalty_method}, a fully primal Gauss-Seidel minimization of \eqref{eq:penalty_function} over $u,z$.
\begin{figure}[h]
\centering
\fbox{
\begin{minipage}{0.9\linewidth}
\begin{algorithm}[proximal penalty method] \label{alg:proximal_penalty_method}
{\ }
Choose $\tau,\sigma>0$ such that $\sigma\|A\|^2 < \lambda$.
For $t=1, 2, \dots$ do
\begin{align*}
u^{t+1} &\in \textstyle P_{\sigma}g\left(u^t - \nicefrac{\sigma}{\lambda}  A^\top(A u^{t} -z^{t}))\right),\\
     z^{t+1} &\in \textstyle P_{\lambda}f\left(A u^{t+1} \right).
\end{align*}
\end{algorithm}
\end{minipage}
}
\end{figure}
The above algorithm appears similar in form to \emph{proximal alternating linearized minimization} (PALM) \cite{bolte2014proximal} applied to \eqref{eq:penalty_function}, where $H(u,z):=\frac{1}{2\lambda}\|A u - z\|^2$ is interpreted as the differentiable coupling term. To update $z$, PALM invokes a second proximal gradient descent step on \eqref{eq:penalty_function} with step size $\tau < \lambda$, i.e.,~$z^{t+1}=\textstyle P_{\tau}f(z^t + \frac{\tau}{\lambda} (A u^{t+1} - z^{t}))$. With the non-admissible step size $\tau=\lambda$, we recast Algorithm \ref{alg:proximal_penalty_method} from PALM.

\section{Convergence Analysis} \label{sec:convergence}
Our convergence proof borrows arguments from \cite{hong2016convergence}, where the convergence of ADMM was shown via a monotonic decrease of the augmented Lagrangian. In our case a Lyapunov function that monotonically decreases over the iterations is obtained by eliminating the variable $w$ from the augmented Lagrangian \eqref{eq:augm_lagrangian}:
\begin{equation} \label{eq:merit}
  \begin{aligned}
&\mathfrak{Q}_{\rho}(u, z, y) = f(z) - \frac{\lambda}{2}\|y\|^2 +g(u) \\
&\qquad+ \langle A u - z , y \rangle+ \frac{\rho}{2} \| A u - z- \lambda y\|^2.
  \end{aligned}
\end{equation}
The following lemma is the central part of our convergence proof, as it guarantees convergence of $\mathfrak{Q}_{\rho}(u^{t}, z^{t}, y^{t})$. Our algorithm has three blocks and invokes a proximal gradient descent step on $\mathfrak{Q}_{\rho}(\cdot, z^{t}, y^{t})$ to update $u$. This keeps all block variable updates computationally tractable, as long as the proximal mappings of $f$ and $g$ are simple. 
\begin{lem} \label{lem:boundedness_suff_desc}
Let $\lambda > 0$. Choose $\rho>0$ sufficiently large so that $\lambda \rho \geq 1$, and then $\sigma>0$ sufficiently small so that $\sigma \rho \|A\|^2 < 1$. Assume that $e_{\lambda} f(A\, \cdot) + g(\cdot)$ is bounded from below. 
Then the following statements hold true:
\begin{enumerate}
\item
$\mathfrak{Q}_{\rho}(u^{t+1}, z^{t+1}, y^{t+1})$ is an upper bound of the quadratic penalty \eqref{eq:penalty_function} at $(u^{t+1}, z^{t+1})$, i.e.,
\begin{equation*}
\begin{aligned}
Q(u^{t+1}, z^{t+1}) \leq \mathfrak{Q}_{\rho}(u^{t+1}, z^{t+1}, y^{t+1}).
\end{aligned}
\end{equation*}
\item
The sequence $\{\mathfrak{Q}_{\rho}(u^{t}, z^{t}, y^{t})\}_{t\in \N}$ is bounded from below.
\item
A sufficient decrease over $\mathfrak{Q}_{\rho}$ for each iteration is guaranteed:
\begin{equation*}
\begin{aligned}
  & \mathfrak{Q}_{\rho}(u^{t+1}, z^{t+1}, y^{t+1})- \mathfrak{Q}_{\rho}(u^{t}, z^{t}, y^{t}) \\
  & \quad\leq \left( \tfrac{\rho \|A\|^2}{2} - \tfrac{1}{2 \sigma} \right)\|u^{t+1}-u^t\|^2 \\ 
  &\quad \quad +\left(\tfrac{1}{\rho}-\tfrac{\rho\lambda^2 + \lambda}{2} \right) \|y^{t+1}-y^{t}\|^2.
\end{aligned}
\end{equation*}
\end{enumerate}
\end{lem}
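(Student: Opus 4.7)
My plan is to prove the three items in the stated order, leveraging the identity $\mathfrak{Q}_\rho(u,z,y) = \mathfrak{L}_\rho(u,z,\lambda y, y)$ obtained by substituting the stationary ridge $w = \lambda y$ of the $w$-block into the augmented Lagrangian \eqref{eq:augm_lagrangian}. Since the last primal optimality condition forces $w^{t+1} = \lambda y^{t+1}$ at every iterate, the $u$- and $z$-updates of Algorithm~\ref{alg:multiblock_primal_dual} can be read as, respectively, a proximal step and an exact minimization step directly on $\mathfrak{Q}_\rho$; this is the crucial bookkeeping step that allows $\mathfrak{Q}_\rho$ to serve as a Lyapunov functional.

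For item~1 I would compute $\mathfrak{Q}_\rho(u,z,y) - Q(u,z)$ by expanding the square in $\mathfrak{Q}_\rho$ and collecting coefficients of $\|y\|^2$, $\langle Au-z, y\rangle$, and $\|Au-z\|^2$. The cross terms combine into a single perfect square, $\mathfrak{Q}_\rho(u,z,y) - Q(u,z) = \tfrac{\rho\lambda - 1}{2\lambda}\|Au - z - \lambda y\|^2$, which is non-negative under $\rho\lambda \geq 1$, proving the claim for every triple. Item~2 is then immediate: partial minimization in $z$ in the definition of $Q$ yields $Q(u,z) \geq e_\lambda f(A u) + g(u)$, the right-hand side is bounded below by hypothesis, and item~1 transfers that lower bound to $\mathfrak{Q}_\rho(u^t,z^t,y^t)$.

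For item~3 I would split the one-step change of $\mathfrak{Q}_\rho$ into three blockwise contributions. The $z$-contribution is an exact minimization of $\mathfrak{Q}_\rho(u^{t+1}, \cdot, y^t)$ (because $w^t = \lambda y^t$), so it is non-positive and can simply be dropped. For the $u$-contribution, $u^{t+1}$ minimizes $g(u) + \tilde\psi(u) + \tfrac12\|u-u^t\|_M^2$, where $\tilde\psi$ is the differentiable quadratic in $u$ coming from $\mathfrak{Q}_\rho(\cdot, z^t, y^t)$; the optimality inequality immediately yields $\mathfrak{Q}_\rho(u^{t+1}, z^t, y^t) - \mathfrak{Q}_\rho(u^t, z^t, y^t) \leq -\tfrac12\|u^{t+1}-u^t\|_M^2$, and expanding $M = \tfrac{1}{\sigma}I - \rho A^\top A$ with $\|A(u^{t+1}-u^t)\|^2 \leq \|A\|^2 \|u^{t+1}-u^t\|^2$ reproduces the stated $u$-term.

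The hard part will be the $y$-contribution. Here I would use that $\phi(y) := \mathfrak{Q}_\rho(u^{t+1}, z^{t+1}, y)$ is a quadratic in $y$ with constant Hessian $\lambda(\rho\lambda - 1)I$, so exact second-order Taylor expansion gives $\phi(y^{t+1}) - \phi(y^t) = \langle \nabla\phi(y^t),\, y^{t+1}-y^t\rangle + \tfrac{\lambda(\rho\lambda - 1)}{2}\|y^{t+1}-y^t\|^2$. A short calculation gives $\nabla\phi(y^t) = -(\rho\lambda - 1)(Au^{t+1} - z^{t+1} - \lambda y^t)$, and the dual-update identity $Au^{t+1} - z^{t+1} - \lambda y^{t+1} = (y^{t+1} - y^t)/\rho$, together with $Au^{t+1} - z^{t+1} - \lambda y^t = (Au^{t+1} - z^{t+1} - \lambda y^{t+1}) + \lambda(y^{t+1} - y^t)$, rewrites $\nabla\phi(y^t)$ as an explicit scalar multiple of $y^{t+1}-y^t$. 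The inner product then collapses to a single $\|y^{t+1}-y^t\|^2$ term, and the factorization $2 - \rho\lambda(\rho\lambda+1) = -(\rho\lambda-1)(\rho\lambda+2)$ (applied with $x = \rho\lambda$) yields exactly the coefficient $\tfrac{1}{\rho} - \tfrac{\rho\lambda^2 + \lambda}{2}$ appearing in the lemma. Summing the three block contributions and bounding the non-positive $z$-part by zero delivers the claim.
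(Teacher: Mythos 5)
Your proposal is correct and follows essentially the same route as the paper: the same perfect-square identity $\mathfrak{Q}_{\rho}(u,z,y)-Q(u,z)=\tfrac{\rho\lambda-1}{2\lambda}\|Au-z-\lambda y\|^2$ for items 1--2, and the same blockwise decomposition (proximal descent for $u$, exact minimization for $z$, explicit computation for $y$) for item 3. The only cosmetic difference is that you evaluate the $y$-block change via exact second-order Taylor expansion of the quadratic $\phi$ instead of the paper's direct algebraic expansion; both yield the identical coefficient $\tfrac{1}{\rho}-\tfrac{\rho\lambda^2+\lambda}{2}$ as an equality.
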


Our aim is to prove subsequential convergence of the iterates $\{(u^t, z^t, y^t)\}_{t\in \N}$ to a critical point of \eqref{eq:consensus_prob_lifted}. To guarantee the existence of such a subsequence, the iterates shall be bounded. To ensure that the conditions of a critical point are met at limit points, the distance of consecutive iterates shall vanish in the limit. All of these are established in the following lemma. 
\begin{lem} \label{lem:limit_feasibility}
Let the assumptions be as in Lemma \ref{lem:boundedness_suff_desc}, and let the quadratic penalty in \eqref{eq:penalty_function} be coercive. Then, the iterates $\{(u^t,z^t)\}_{t \in \N}$ are uniformly bounded and 
\begin{align*}
\|u^{t+1}-u^t\| \to 0,
\end{align*}
as $t\to\infty$.
If in particular $\rho\lambda > 1$, then the iterates $\left\{y^t\right\}_{t\in\N}$ are uniformly bounded, and feasibility is achieved in the limit, i.e.,
\begin{align*}
\|A u^{t} - z^t - \lambda y^t\| \to 0,
\end{align*}
and furthermore
\begin{align*}
&\|z^{t+1}-z^{t}\| \to 0, \\
&\|y^{t+1}-y^{t}\| \to 0.
\end{align*}
\end{lem}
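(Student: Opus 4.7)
The plan is to bootstrap from the sufficient-decrease estimate in Lemma~\ref{lem:boundedness_suff_desc}(3), combined with the lower bound of $\mathfrak{Q}_{\rho}$ in Lemma~\ref{lem:boundedness_suff_desc}(2), the upper bound in Lemma~\ref{lem:boundedness_suff_desc}(1), and a single algebraic manipulation of the dual update in Algorithm~\ref{alg:multiblock_primal_dual}.

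First I would verify that both coefficients appearing on the right-hand side of the sufficient decrease are non-positive under our step-size conditions. The coefficient $\tfrac{\rho\|A\|^{2}}{2}-\tfrac{1}{2\sigma}$ is strictly negative by $\sigma\rho\|A\|^{2}<1$. Setting $x=\rho\lambda$, the second coefficient equals $\tfrac{1}{\rho}(1-\tfrac{x^{2}+x}{2})$, which is non-positive iff $x^{2}+x\geq 2$, i.e.\ $x\geq 1$; hence $\rho\lambda\geq 1$ suffices for monotonic decrease of $\mathfrak{Q}_{\rho}$, and strict $\rho\lambda>1$ yields a strictly negative coefficient on $\|y^{t+1}-y^{t}\|^{2}$. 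Monotonicity together with Lemma~\ref{lem:boundedness_suff_desc}(1) then gives $Q(u^{t+1},z^{t+1})\leq \mathfrak{Q}_{\rho}(u^{t+1},z^{t+1},y^{t+1})\leq \mathfrak{Q}_{\rho}(u^{1},z^{1},y^{1})$, and coercivity of $Q$ yields uniform boundedness of $\{(u^{t},z^{t})\}_{t\in\N}$.

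Next I would telescope the sufficient decrease from $t=0$ to $\infty$; combined with the lower bound from Lemma~\ref{lem:boundedness_suff_desc}(2) this produces $\sum_{t}\|u^{t+1}-u^{t}\|^{2}<\infty$, whence $\|u^{t+1}-u^{t}\|\to 0$. Under the strict condition $\rho\lambda>1$, the same telescoping additionally yields $\sum_{t}\|y^{t+1}-y^{t}\|^{2}<\infty$ and hence $\|y^{t+1}-y^{t}\|\to 0$. The remaining conclusions are then extracted from the $y$-update $(1+\rho\lambda)y^{t+1}=y^{t}+\rho(Au^{t+1}-z^{t+1})$, which I rearrange into the key identity
\[
Au^{t+1}-z^{t+1}-\lambda y^{t+1} \;=\; \tfrac{1}{\rho}(y^{t+1}-y^{t}).
\]
Since $\|y^{t+1}-y^{t}\|\to 0$, this immediately gives the feasibility statement $\|Au^{t}-z^{t}-\lambda y^{t}\|\to 0$. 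Isolating $\lambda y^{t+1}$ and using boundedness of $(u^{t},z^{t})$ together with boundedness of $\|y^{t+1}-y^{t}\|$ (which holds since this quantity tends to zero) produces a uniform bound on $\{y^{t}\}_{t\in\N}$. Finally, writing $z^{t}=Au^{t}-\lambda y^{t}-\tfrac{1}{\rho}(y^{t}-y^{t-1})$ from the same identity and differencing gives
\[
z^{t+1}-z^{t} = A(u^{t+1}-u^{t})-\lambda(y^{t+1}-y^{t})-\tfrac{1}{\rho}\bigl((y^{t+1}-y^{t})-(y^{t}-y^{t-1})\bigr),
\]
each summand of which vanishes in the limit, so $\|z^{t+1}-z^{t}\|\to 0$.

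The main obstacle, I expect, is simply recognizing that once $\|y^{t+1}-y^{t}\|\to 0$ is in hand, all three $y$/$z$-related claims (boundedness of $\{y^{t}\}$, limit feasibility, and vanishing of $\|z^{t+1}-z^{t}\|$) follow from the single rearrangement of the dual update displayed above; the decrease estimate by itself gives no direct information about $\|z^{t+1}-z^{t}\|$, so routing through the dual identity is essential.
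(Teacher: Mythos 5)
Your proposal is correct and follows essentially the same route as the paper: boundedness of $\{(u^t,z^t)\}$ via monotonic decrease of $\mathfrak{Q}_\rho$, the upper bound $Q\leq\mathfrak{Q}_\rho$, and coercivity; telescoping the sufficient-decrease estimate against the lower bound to get summability of the squared increments; and then extracting feasibility, boundedness of $\{y^t\}$, and $\|z^{t+1}-z^t\|\to 0$ from the rearranged dual update $Au^{t+1}-z^{t+1}-\lambda y^{t+1}=\tfrac{1}{\rho}(y^{t+1}-y^t)$. Your explicit sign check of the coefficient $\tfrac{1}{\rho}-\tfrac{\rho\lambda^2+\lambda}{2}$ and the cleaner differenced form of the $z$-increment are only presentational refinements of the paper's argument.
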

We remark that the second part of the lemma holds for the case $\rho\lambda = 1$, i.e., it refers to the case when our Algorithm \ref{alg:multiblock_primal_dual} specializes to Algorithm \ref{alg:proximal_penalty_method}. 

It remains to show that limit points of the algorithm correspond to critical points of the lifted problem \eqref{eq:consensus_prob_lifted}. To this end we define the subgradient 
\cite[Definition 8.3]{Rockafellar-Variational-Analysis} as follows.
\begin{definition}[subgradients] \label{def:general_subdiff}
Consider a function $f:\R^{m} \to \R \cup \{ \infty \}$ and a point $\bar{v}$
with $f(\bar{v})$ finite. For a vector $y\in \R^{m}$, one says that
\begin{enumerate}
\item $y$ is a regular subgradient of f at $\bar{v}$, written $y \in \hat{\partial} f(\bar{v})$, if
\begin{align*}
\liminf_{\substack{v \to \bar{v} \\ v \neq \bar{v}}} \frac{f(v)-f(\bar{v})- \langle y,v-\bar{v} \rangle }{\|v-\bar{v}\|} \geq 0.
\end{align*}
\item $y$ is a (general) subgradient of $f$ at $\bar{v}$, written $y \in \partial f(\bar{v})$, if there are sequences $v^t \to \bar{v}$ with $f(v^t) \to f(\bar{v})$ and $y^t \in \hat{\partial} f(v^t)$ with $y^t \to y$.
\end{enumerate}
\end{definition}
In the following theorem we guarantee subsequential convergence to critical points of \eqref{eq:consensus_prob_lifted}. Having proven that the difference of consecutive iterates vanishes in the limit, optimality follows directly from the optimality conditions of the subproblems.
\begin{thm} \label{thm:stat_point_lifted}
Let $(u^*, z^*, y^*)$ be any limit point of the sequence $\{(u^t, z^t, y^t)\}_{t\in\N}$ produced by Algorithm \ref{alg:multiblock_primal_dual}. Then $(u^*, z^*, y^*)$ gives rise to a critical point of problem \eqref{eq:consensus_prob_lifted}, i.e.,
\begin{align}
&0 \in \partial f(z^*)-y^*, \label{eq:opt_lifted_1} \\
&0 \in \partial g(u^*)+A^\top y^*, \label{eq:opt_lifted_2} \\
&Au^* - z^* - \lambda y^*= 0 \label{eq:opt_lifted_3},
\end{align}
with $w^*:=\lambda y^*$.
\end{thm}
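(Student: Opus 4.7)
The plan is to extract the critical-point conditions directly from the optimality conditions of the three subproblems in Algorithm~\ref{alg:multiblock_primal_dual}, passing to the limit along a convergent subsequence whose existence is guaranteed by the boundedness part of Lemma~\ref{lem:limit_feasibility}. Since $\{(u^t,z^t,y^t)\}$ is bounded, I may pick a subsequence (still indexed by $t$ for brevity) converging to $(u^*,z^*,y^*)$. Equation~\eqref{eq:opt_lifted_3} is then immediate: Lemma~\ref{lem:limit_feasibility} gives $\|Au^t - z^t - \lambda y^t\| \to 0$, so taking the limit yields $Au^* - z^* - \lambda y^* = 0$, and setting $w^* := \lambda y^*$ recovers primal feasibility $Au^* - z^* - w^* = 0$.

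Next, I would write out the optimality conditions of the $z$- and $u$-subproblems. For $z$, the condition of $z^{t+1}\in P_{1/\rho}f(Au^{t+1}+(1/\rho-\lambda)y^t)$ reads $\rho(Au^{t+1}-z^{t+1}) + (1-\rho\lambda)y^t \in \partial f(z^{t+1})$, and substituting the explicit form of the $y$-update gives the compact expression $(1+\rho\lambda)y^{t+1}-\rho\lambda y^t \in \partial f(z^{t+1})$. For $u$, the proximal step yields $-\tfrac{1}{\sigma}(u^{t+1}-u^t) - A^\top(y^t+\rho(Au^t-z^t-\lambda y^t)) \in \partial g(u^{t+1})$. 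Using Lemma~\ref{lem:limit_feasibility}, each right-hand side converges along the subsequence: in the $z$-relation the limit is $(1+\rho\lambda)y^*-\rho\lambda y^* = y^*$, and in the $u$-relation, since $u^{t+1}-u^t\to 0$ and $Au^t-z^t-\lambda y^t\to 0$, the limit is $-A^\top y^*$.

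To close the argument I need to invoke the closedness property of the general subdifferential in Definition~\ref{def:general_subdiff}, which demands not only $z^{t+1}\to z^*$ and $u^{t+1}\to u^*$ but also $f(z^{t+1})\to f(z^*)$ and $g(u^{t+1})\to g(u^*)$. This is the main technical hurdle. Lower semicontinuity of $f$ and $g$ only delivers $\liminf f(z^{t+1}) \ge f(z^*)$ and $\liminf g(u^{t+1}) \ge g(u^*)$; for the reverse inequality I would exploit the variational characterization of the proximal steps. Since $z^{t+1}$ minimizes $f(\cdot) + \tfrac{\rho}{2}\|\cdot - A u^{t+1} - (\tfrac{1}{\rho}-\lambda)y^t\|^2$, comparing its value at $z^{t+1}$ with the feasible point $z^*$ and passing to the limsup (using continuity of the quadratic terms under $u^{t+1}\to u^*$, $y^t\to y^*$) yields $\limsup f(z^{t+1}) \le f(z^*)$. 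The analogous comparison for the $u$-subproblem, evaluated at $u^*$, combined with the vanishing of $\|u^{t+1}-u^t\|$, produces $\limsup g(u^{t+1}) \le g(u^*)$.

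With these two continuity-along-the-subsequence statements in hand, applying Definition~\ref{def:general_subdiff}(2) to the subgradient relations from the previous paragraph gives $y^* \in \partial f(z^*)$ and $-A^\top y^* \in \partial g(u^*)$, which are precisely \eqref{eq:opt_lifted_1} and \eqref{eq:opt_lifted_2}. Together with \eqref{eq:opt_lifted_3} established at the outset, this completes the identification of $(u^*,z^*,y^*)$ as a critical point of \eqref{eq:consensus_prob_lifted}.
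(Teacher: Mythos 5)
Your proposal is correct and follows essentially the same route as the paper: write down the optimality conditions of the three subproblem updates, pass to the limit along the convergent subsequence using Lemma~\ref{lem:limit_feasibility}, and invoke closedness of the limiting subdifferentials of $f$ and $g$. In fact you are more careful than the paper on one point it glosses over, namely verifying the function-value convergence $f(z^{t+1})\to f(z^*)$ and $g(u^{t+1})\to g(u^*)$ required by Definition~\ref{def:general_subdiff}(2), which you correctly obtain by comparing the prox-subproblem objectives at $z^{t+1}$ versus $z^*$ (and at $u^{t+1}$ versus $u^*$) and combining the resulting $\limsup$ bound with lower semicontinuity.
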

\begin{proof}
Let $\{t_j\}_{j\in\N} \subset \{t \}_{t\in\N}$ be the subindices such that
$\lim_{j\to \infty} (u^{t_j}, z^{t_j}, y^{t_j}) = (u^*, z^*, y^*)$.
The optimality condition for the $u$-update is given as
\begin{align*}
0 \in &\, \partial g(u^{t_j+1}) + \frac{1}{\sigma}(u^{t_j+1}- u^{t_j}) + A^\top y^{t_j} \\
&+ \rho A^\top (A u^{t_j} -z^{t_j} - \lambda y^{t_j}).
\end{align*}
The optimality condition for the $z$-update is given as
$$
0 \in \partial f(z^{t_j+1}) - y^{t_j} -\rho (A u^{t_j+1} -z^{t_j+1} - \lambda y^{t_j}).
$$
The $y$-update gives
$$
y^{t_j+1}= \tfrac{1}{1 + \rho \lambda}(y^{t_j} + \rho(A u^{t_j+1} - z^{t_j+1})).
$$
Letting $j\to \infty$, by Lemma \ref{lem:limit_feasibility} and the closedness of $\partial f$ and $\partial g$, we obtain \eqref{eq:opt_lifted_1}, \eqref{eq:opt_lifted_2} and \eqref{eq:opt_lifted_3}.
\end{proof}
We conclude this section with the following remark.
\begin{remark}
The above result reveals that we obtain a solution to the unregularized problem \eqref{eq:consensus_prob}, where $e_\lambda f$ is replaced with $f$, up to a violation of the linear constraint absorbed by $\lambda y^*$; see Eq.~\eqref{eq:opt_lifted_3}. For $f$ Lipschitz continuous with modulus $L$ over $\mathrm{dom}(f)$ (which contains $z^*$), one can a-priori specify a bound 
on the violation of the linear constraint, i.e.,
\begin{equation*} 
\|\lambda y^*\| \leq \lambda L.
\end{equation*}
Furthermore, this also implies that in the limit case $\lambda \to 0^+$ we recover the optimality conditions of the unregluarized problem \eqref{eq:consensus_prob}.
\end{remark}
\section{Optimality Qualifications} \label{sec:qualification}
In this section, we investigate when a critical point of the lifted problem translates to a critical point of the Moreau-Yosida regularized problem \eqref{eq:consensus_prob_regularized} by reversing the substitution \eqref{eq:substitution}. 
We identify optimality qualifications, in case of \emph{prox-regularity} and \emph{piecewise convexity} respectively, under which such a translation holds true. We further show, that in the piecewise convex case, the optimality qualifications can be guaranteed to hold for a particular choice of the step size $\nicefrac{1}{\rho}$.

\subsection{Prox-Regular Functions}
It is easy to see that, for convex $f$, reversing the substitution yields a critical point. Here we show that this assumption can be relaxed to $f$ being prox-regular at a limit point of the iterates. A prox-regular function behaves locally like a (semi-)convex function in the sense that its Moreau envelope (with sufficiently large $\lambda$) is locally Lipschitz differentiable and the associated proximal mapping is single-valued in a small neighborhood of the input argument.
We first define prox-bounded and prox-regular functions according to \cite{Rockafellar-Variational-Analysis}.
\begin{definition}[prox-boundedness]
A function $f:\R^{m} \to \R \cup \{ \infty \}$ is prox-bounded if there exists $\lambda > 0$ such that $e_\lambda f(v) > -\infty$ for some $v \in \R^{m}$. 
\end{definition}
\begin{definition}[prox-regularity] A function $f:\R^{m} \to \R \cup \{ \infty \}$ is prox-regular at $\bar{z}$ for $\bar{y}$ if $f$ is finite and locally lsc at $\bar{z}$ with $\bar{y}\in \partial f(\bar{z})$, and there exist $\epsilon > 0$ and $r \geq 0$ such that
\begin{align*}
f(z')\geq f(z) + \langle y, z' - z\rangle -\frac{r}{2}\|z'-z\|^2
\end{align*}
for all $\|z' -\bar{z}\|<\epsilon$,
when $y\in \partial f(z)$, $\|y-\bar{y}\| <\epsilon$, $\|z-\bar{z}\| <\epsilon$, $f(z)<f(\bar{z})+\epsilon$. 
\end{definition}

\begin{thm} \label{thm:stat_point_regularized_prox_reg}
Let $(u^*, z^*, y^*)$ be any limit point of the sequence $\left\{(u^t, z^t, y^t)\right\}_{t\in\N}$ produced by Algorithm \ref{alg:multiblock_primal_dual}. Let $f$ be prox-regular at $z^*$ for $y^*\in\partial f(z^*)$, and also prox-bounded. 
Set $v^*:=z^* + \lambda y^*$. Then, for $\lambda>0$ sufficiently small, $(u^*,v^*, y^*)$ corresponds to a critical point of the regularized problem \eqref{eq:consensus_prob_regularized}:
\begin{align}
&0 = \nabla e_{\lambda}f(v^*) - y^*, \label{eq:opt_regularized_prox_reg}\\
&0 \in \partial g(u^*) + A^\top y^*, \label{eq:opt_regularized_2} \\
&A u^* - v^* = 0 \label{eq:opt_regularized_3}.
\end{align}
\end{thm}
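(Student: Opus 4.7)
The plan is to reduce the three claims \eqref{eq:opt_regularized_prox_reg}--\eqref{eq:opt_regularized_3} to what has already been established in Theorem~\ref{thm:stat_point_lifted}, and then invoke the classical regularization theory for prox-regular functions to handle the one genuinely new statement \eqref{eq:opt_regularized_prox_reg}.

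First I would observe that \eqref{eq:opt_regularized_2} is literally \eqref{eq:opt_lifted_2}, and that \eqref{eq:opt_regularized_3} follows directly from \eqref{eq:opt_lifted_3} combined with the definition $v^*:=z^*+\lambda y^*$, since then $Au^* - v^* = Au^* - z^* - \lambda y^* = 0$. So the entire content of the theorem is the gradient identity \eqref{eq:opt_regularized_prox_reg}.

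For \eqref{eq:opt_regularized_prox_reg}, I would use the regularization theorem for prox-regular, prox-bounded functions (Rockafellar--Wets, Theorem~13.37 / Proposition~13.37): if $f$ is prox-regular at $\bar z$ for $\bar y\in\partial f(\bar z)$ and prox-bounded, then there exists $\bar\lambda>0$ such that for every $\lambda\in(0,\bar\lambda)$, the proximal mapping $P_\lambda f$ is single-valued and Lipschitz continuous in a neighborhood of $\bar v := \bar z + \lambda \bar y$, the Moreau envelope $e_\lambda f$ is $C^{1,1}$ on that neighborhood, and
\begin{equation*}
\nabla e_\lambda f(v) \;=\; \tfrac{1}{\lambda}\bigl(v - P_\lambda f(v)\bigr).
\end{equation*}
From Theorem~\ref{thm:stat_point_lifted}, the limit point satisfies $y^*\in\partial f(z^*)$, so the hypothesis of that theorem is exactly met at $(\bar z,\bar y)=(z^*, y^*)$. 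By the local characterization of $P_\lambda f$ via prox-regularity, the point $z^*$ is the unique local minimizer of $z\mapsto f(z)+\tfrac{1}{2\lambda}\|z-v^*\|^2$ near $z^*$, and single-valuedness identifies it with $P_\lambda f(v^*)$. Substituting $v^* = z^* + \lambda y^*$ into the gradient formula gives
\begin{equation*}
\nabla e_\lambda f(v^*) \;=\; \tfrac{1}{\lambda}\bigl(v^* - z^*\bigr) \;=\; y^*,
\end{equation*}
which is \eqref{eq:opt_regularized_prox_reg}.

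The main obstacle is clean invocation of the ``sufficiently small $\lambda$'' clause: prox-regularity only gives an $r$-threshold such that the formula holds for $\lambda<1/r$, and the local neighborhood where $P_\lambda f$ is single-valued is only guaranteed around $\bar v$. In our setting $\bar v$ \emph{is} the point $v^*$ of interest, so the local result suffices; but one must verify the compatibility between the $\epsilon$ from the definition of prox-regularity (bounding $\|z-\bar z\|$, $\|y-\bar y\|$, and $f(z)-f(\bar z)$) and the neighborhood in which the subdifferential inequality can be used to identify $z^*$ as the unique proximal point of $v^*$. This amounts to citing the quoted proposition rather than reproving it, and noting that the admissible range of $\lambda$ depends only on the $r$ and $\epsilon$ furnished by prox-regularity at $(z^*,y^*)$.
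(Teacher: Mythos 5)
Your proposal is correct and follows essentially the same route as the paper: conditions \eqref{eq:opt_regularized_2} and \eqref{eq:opt_regularized_3} are read off from Theorem \ref{thm:stat_point_lifted}, and \eqref{eq:opt_regularized_prox_reg} is obtained from the Rockafellar--Wets regularization result for prox-regular, prox-bounded functions. The one substantive difference is that Proposition 13.37 as actually stated applies only at a point where the \emph{zero} vector is the reference subgradient; the general-$\bar y$ version you quote (single-valuedness of $P_\lambda f$ near $\bar z+\lambda\bar y$ with $P_\lambda f(\bar z+\lambda\bar y)=\bar z$) is true but is not the literal citation. The paper supplies exactly this bridge by tilting, i.e.\ applying the proposition to $f(\cdot)-\langle\cdot,y^*\rangle$, which is prox-regular at $z^*$ for the subgradient $0$, and then using the identity $e_{\lambda}\bigl(f(\cdot)-\langle\cdot,y^*\rangle\bigr)(z)=e_{\lambda}f(z+\lambda y^*)-\tfrac{\lambda}{2}\|y^*\|^2-\langle z,y^*\rangle$ to transfer the gradient formula to $e_\lambda f$ at $v^*=z^*+\lambda y^*$. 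Your argument is complete once you either cite a source stating the general-$\bar y$ form explicitly or insert this short tilt computation.
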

\begin{proof}
Conditions \eqref{eq:opt_regularized_2} and \eqref{eq:opt_regularized_3} follow directly from Theorem \ref{thm:stat_point_lifted}.
From Theorem \ref{thm:stat_point_lifted} we also know $0 \in \partial f(z^*) - y^*$, or equivalently $0 \in \partial (f(\cdot) -\langle \cdot, y^* \rangle)(z^*)$.
Since $f(\cdot) - \langle \cdot, y^* \rangle$ is prox-regular at $z^*$ with subgradient $0$ 
and also prox-bounded, we can apply \cite[Proposition 13.37]{Rockafellar-Variational-Analysis} and obtain that $\nabla e_{\lambda} (f (\cdot)- \langle \cdot, y^* \rangle)$ is differentiable at $z^*$ with $0 = \nabla e_{\lambda} (f(\cdot) - \langle \cdot, y^* \rangle)(z^*)$, for some $\lambda > 0$ sufficiently small.
A straightforward calculation shows that $
e_{\lambda} (f(\cdot) - \langle \cdot, y^* \rangle)(z) =e_{\lambda}f(z + \lambda y^*) - \tfrac{\lambda}{2} \|y^*\|^2 -\langle z, y^* \rangle$.
Further differentiating both sides at $z=z^*$ yields \eqref{eq:opt_regularized_prox_reg} with $v^*=z^* + \lambda y^*$. 
\end{proof}
As prox-regularity is a local property \cite{Rockafellar-Variational-Analysis}, it serves as a useful tool to prove local convergence results; see, e.g., \cite{ochs2016local}. However, in our case, we are rather interested in global (subsequential) convergence. In this sense the above theorem comes with a caveat.
\begin{remark}
There may be a cyclic dependency between the step size $\nicefrac{1}{\rho} < \lambda$ of the algorithm and the choice of $\lambda$. A smaller $\lambda$ requires the choice of the step size $\nicefrac{1}{\rho}$ to be smaller, which may alter the limit point. This makes it impossible to a-priori specify a feasible pair of parameter $\lambda$ and step size $\nicefrac{1}{\rho}$, as long as there exists no uniform $\lambda$ such that $f$ is prox-regular everywhere. 
\end{remark}
As an exception to the above remark, a \emph{semi-convex} function $f$ is prox-regular everywhere with uniform $\lambda$ as in, e.g., \cite{ArtinaFS13,moellenhoff-siims-15}. Nonetheless, the semi-convexity assumption is often too strong in practice, e.g.,~for piecewise convex functions studied in the next subsection.

\subsection{Piecewise Convex Functions} \label{sec:piecewiseconvex}
In the remainder of this section we consider $f:\R^m \to\R \cup \{\infty\}$ being a piecewise convex function with finitely many pieces.
Such a function can be expressed in terms of a pointwise minimum over convex functions (which include indicator functions):
\begin{align*}
f(z):= \min_{i\in \mathcal{I}} f_i(z),
\end{align*}
with each $f_i:\R^m \to\R \cup \{\infty\}$ proper, convex, lsc and a finite index set $\mathcal{I}$. 
Its Moreau envelope $e_\lambda f$ is given in terms of a pointwise minimum over the Moreau envelopes of the individual functions $\{f_i\}$, i.e.,
\begin{align*} 
e_\lambda f(z) = \min_{i\in \mathcal{I}} e_\lambda f_i(z),
\end{align*}
and therefore remains nonsmooth. Due to \cite[Theorem 2.26]{Rockafellar-Variational-Analysis}, each individual piece $e_\lambda f_i$ is continuously differentiable, as $f_i$ is convex, proper, and lsc.
Practically, this class is relevant in statistics and machine learning. 
For examples we refer to Sections \ref{sec:intro} and \ref{sec:numerics}.
Even though $f$ is prox-regular almost everywhere, Theorem \ref{thm:stat_point_regularized_prox_reg} cannot be applied conveniently due to the fact that a qualified $\lambda$ rather depends on the (a-priori unknown) limit point. Hence, our goal is to prove a more explicit optimality qualification for piecewise convex $f$ and a presumed $\lambda$, 
so as to overcome the limitation of Theorem \ref{thm:stat_point_regularized_prox_reg}. 
To this end, we first define the set of active indices at $z$ as follows. 

\begin{definition}[active set]
Let $z \in \R^{m}$ with $f(z)$ finite. The active index set $\mathcal{A}_f(z)$ of $f$ at $z$ is defined as
\begin{align*}
\mathcal{A}_f(z) = \left\{i \in \mathcal{I} : f(z) = f_{i}(z) \right\}.
\end{align*}
\end{definition}
We will show that the following qualification condition 
\begin{align} \label{eq:qualification_condition}
\mathcal{A}_f(z^*)\subset \mathcal{A}_{e_\lambda f}(z^*+\lambda y^*)
\end{align}
guarantees optimality of the limit points.
In essence, the condition requests that, after translation from $z^*$ to $z^*+\lambda y^*$, the same piece remains active with respect to its Moreau envelope.
We remark that for a particular choice of the step size $\nicefrac{1}{\rho}=\lambda$, the active-set condition \eqref{eq:qualification_condition} is achieved automatically, cf.~Theorem \ref{thm:stat_point_regularized_piecew_conv_2}. 

In the following, we characterize the subgradient of $f$ in terms of the (convex) subgradients of the individual pieces. In Lemma \ref{lem:subdiff_pointwise_min_incl} we show that an inclusion holds in a general setting. In Lemma \ref{lem:subdiff_pointwise_min_eq} we show that this inclusion holds with equality for the Moreau envelopes if the hypograph of $e_\lambda f$ satisfies the \emph{linear independence constraint qualification} (LICQ). 

\begin{lem} \label{lem:subdiff_pointwise_min_incl}
Let $z\in \R^m$ with $f(z)$ finite. Then the following inclusion holds:
\begin{align} \label{eq:inclusion_subdiff}
\partial f(z) \subset \bigcup_{i \in \mathcal{A}_f(z)} \partial f_i(z).
\end{align}
\end{lem}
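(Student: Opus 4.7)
The plan is to establish the inclusion first for regular subgradients and then propagate to the full general subgradient $\partial f(z)$ via the defining sequential description in Definition~\ref{def:general_subdiff}(2). Finiteness of $\mathcal{I}$ will be the decisive structural feature at the second step.

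First I would fix $i\in\mathcal{A}_f(z)$ and $y\in\hat\partial f(z)$. Since $f_i(z')\ge f(z')$ for every $z'$ and $f_i(z)=f(z)$, the difference quotient for $f_i$ dominates that for $f$:
\[
\frac{f_i(z')-f_i(z)-\langle y,z'-z\rangle}{\|z'-z\|}\;\ge\;\frac{f(z')-f(z)-\langle y,z'-z\rangle}{\|z'-z\|}.
\]
Taking $\liminf$ as $z'\to z$ gives $y\in\hat\partial f_i(z)$, which coincides with $\partial f_i(z)$ by convexity of $f_i$. In fact, this proves the stronger statement $\hat\partial f(z)\subset \bigcap_{i\in\mathcal{A}_f(z)}\partial f_i(z)$.

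For a general subgradient $y\in\partial f(z)$, I would invoke Definition~\ref{def:general_subdiff}(2) to select sequences $z^t\to z$ with $f(z^t)\to f(z)$ and $y^t\in\hat\partial f(z^t)$ with $y^t\to y$. For $t$ large, $f(z^t)$ is finite, so the active set $\mathcal{A}_f(z^t)\subset\mathcal{I}$ is nonempty. Since $\mathcal{I}$ is finite, a pigeonhole argument yields a subsequence along which a single index $i_0$ lies in $\mathcal{A}_f(z^t)$ for every $t$. The previous step applied pointwise then gives $y^t\in\partial f_{i_0}(z^t)$, while $f_{i_0}(z^t)=f(z^t)\to f(z)$.

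Finally, lower semicontinuity of $f_{i_0}$ forces $f_{i_0}(z)\le\liminf_t f_{i_0}(z^t)=f(z)$, and the reverse inequality is automatic from $f=\min_j f_j$; hence $i_0\in\mathcal{A}_f(z)$. Outer semicontinuity of the convex subdifferential $\partial f_{i_0}$ --- applicable here because $f_{i_0}(z^t)\to f_{i_0}(z)$ --- then yields $y\in\partial f_{i_0}(z)\subset\bigcup_{i\in\mathcal{A}_f(z)}\partial f_i(z)$. The main obstacle is to synchronize two limiting operations on the active set: the chosen index $i_0$ must both persist along a subsequence of $z^t$ and remain active at the limit $z$. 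The first is handled by the pigeonhole argument on the finite set $\mathcal{I}$, the second by lower semicontinuity; once both are secured, closedness of the convex subdifferential delivers the inclusion immediately.
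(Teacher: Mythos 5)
Your proof is correct and follows essentially the same route as the paper's: a pigeonhole argument on the finite index set to extract a persistently active index along a subsequence, domination of the difference quotients via $f_{i}\ge f$ with equality at active points, and closedness of the (convex) subdifferential to pass to the limit. The only difference is cosmetic --- you spell out via lower semicontinuity why the persistent index remains active at the limit point, a step the paper asserts without detail.
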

\begin{proof}
Let $y \in \partial f(z)$. By definition, we have $z^t \to z$ with $f(z^t) \to f(z)$ and $y^t \in \hat{\partial} f(z^t)$ with $y^t \to y$. Since the active set is never empty we may choose $i^t \in \mathcal{A}_f(z^t)$ for each $t$. Since $\{i^t\}_{t\in\N}$ is discrete and $f(z^t) \to f(z)$, there exists a subsequence $\{t_j\}_{j\in\N} \subset \{t\}_{t\in\N}$ such that $i^{t_j} = i^*$ for all $j\in\N$ with some constant $i^* \in \mathcal{A}_f(z)$.
Since $y^{t_j} \in \hat{\partial} f(z^{t_j})$ and $f_{i^*}(z^{t_j}) = f(z^{t_j})$ and since $f_{i^*}(\cdot) \geq f(\cdot)$, we have
\begin{align*}
0&\leq \liminf_{z' \to z^{t_j}} \frac{f(z')-f(z^{t_j})- \langle y^{t_j},z'-z^{t_j} \rangle }{\|z'-z^{t_j}\|} \\
&\leq \liminf_{z' \to z^{t_j}} \frac{f_{i^*}(z')-f_{i^*}(z^{t_j})- \langle y^{t_j},z'-z^{t_j} \rangle }{\|z'-z^{t_j}\|},
\end{align*}
which implies that $y^{t_j} \in \hat{\partial} f_{i^*}(z^{t_j})$ for all $j$. Passing $j\to\infty$, we conclude that $y \in \partial f_{i^*}(z)$.
\end{proof}

\begin{lem} \label{lem:subdiff_pointwise_min_eq}
Let $v\in \R^m$ with $e_\lambda f(v)$ finite. Assume that the set of active normals at $v$, which is defined by
\begin{align} \notag
\U_v:=\left\{ (\nabla e_\lambda f_i(v), -1)^\top: i \in \mathcal{A}_{e_\lambda f}(v)  \right\},
\end{align} 
is linearly independent. Then \eqref{eq:inclusion_subdiff} holds with equality for the Moreau envelope $e_\lambda f$:
\begin{align} \notag
\partial e_\lambda f(v) = \left\{ \nabla e_\lambda f_i(v): i \in \mathcal{A}_{e_\lambda f}(v) \right\}.
\end{align}
\end{lem}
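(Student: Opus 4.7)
The plan is to establish the two set inclusions separately.

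For ``$\subset$'', I would apply Lemma \ref{lem:subdiff_pointwise_min_incl} directly to the pointwise-minimum representation $e_\lambda f = \min_{i \in \mathcal{I}} e_\lambda f_i$. Since each $f_i$ is proper, convex, and lsc, \cite[Theorem 2.26]{Rockafellar-Variational-Analysis} ensures that $e_\lambda f_i$ is continuously differentiable, so $\partial e_\lambda f_i(v) = \{\nabla e_\lambda f_i(v)\}$. Plugging this into the inclusion from Lemma \ref{lem:subdiff_pointwise_min_incl} immediately yields $\partial e_\lambda f(v) \subset \{\nabla e_\lambda f_i(v) : i \in \mathcal{A}_{e_\lambda f}(v)\}$, and this half does not require the LICQ hypothesis on $\U_v$.

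For ``$\supset$'', I would fix an arbitrary active index $i \in \mathcal{A}_{e_\lambda f}(v)$ and construct a sequence $z^t \to v$ along which piece $i$ is the unique active one, i.e., $\mathcal{A}_{e_\lambda f}(z^t) = \{i\}$ for all $t$ large. In a neighborhood of any such $z^t$, continuity of each $e_\lambda f_j$ forces $e_\lambda f \equiv e_\lambda f_i$, so $\nabla e_\lambda f_i(z^t) \in \hat{\partial} e_\lambda f(z^t)$, and letting $t \to \infty$ together with continuity of $\nabla e_\lambda f_i$ yields $\nabla e_\lambda f_i(v) \in \partial e_\lambda f(v)$ via Definition \ref{def:general_subdiff}(2). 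To produce the sequence I would seek a direction $d \in \R^m$ with $\langle \nabla e_\lambda f_j(v) - \nabla e_\lambda f_i(v), d \rangle > 0$ for every $j \in \mathcal{A}_{e_\lambda f}(v) \setminus \{i\}$ and take $z^t := v + t^{-1} d$; a first-order Taylor expansion combined with the equalities $e_\lambda f_i(v) = e_\lambda f_j(v)$ then renders $i$ the sole active index at $z^t$, while strict inequalities at $v$ against the inactive pieces are preserved by continuity.

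The main obstacle is extracting such a $d$ from the LICQ on $\U_v$, which lives in $\R^{m+1}$ through the appended $-1$ coordinate rather than in $\R^m$. A short bookkeeping argument bridges the gap: any nontrivial relation $\sum_{j \neq i} c_j (\nabla e_\lambda f_j(v) - \nabla e_\lambda f_i(v)) = 0$ would, upon setting $c_i := -\sum_{j \neq i} c_j$, give $\sum_{j \in \mathcal{A}_{e_\lambda f}(v)} c_j (\nabla e_\lambda f_j(v), -1) = 0$ with not all $c_j$ vanishing, contradicting linear independence of $\U_v$. Hence the differences $\{\nabla e_\lambda f_j(v) - \nabla e_\lambda f_i(v)\}_{j \neq i}$ are linearly independent in $\R^m$, and a suitable $d$ may be chosen from a basis dual to any extension of these differences to a basis of $\R^m$.
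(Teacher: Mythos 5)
Your proposal is correct and follows essentially the same route as the paper: for each active index $i$ you build a direction along which $i$ becomes the unique active piece of $e_\lambda f$, use that $e_\lambda f$ then coincides locally with the smooth $e_\lambda f_i$, and pass to the limit via Definition \ref{def:general_subdiff}. The only (cosmetic) difference is how the direction is extracted from the LICQ: you reduce to linear independence of the gradient differences in $\R^m$ and pick $d$ via a dual basis, whereas the paper solves the normal equations $U^\top U\alpha_i=-\gamma_i$ for the lifted normals in $\R^{m+1}$; both yield the same separating inequalities.
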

\begin{proof}
Note that, for each $i\in \mathcal{A}_{e_\lambda f}(v)$, $e_\lambda f_i$ is continuously differentiable since $f_i$ is proper, convex, and lsc (cf.~\cite[Theorem 2.26]{Rockafellar-Variational-Analysis}). To show the desired result we construct for any $i \in \mathcal{A}_{e_\lambda f}(v)$ a direction $d_i \in \R^m$ such that for $\tau >0$ sufficiently small, $i$ is the only active piece at $v+ \tau d_i$, i.e.,~$\mathcal{A}_{e_\lambda f}(v + \tau d_i)=\{i\}$. Then clearly $e_\lambda f$ is differentiable at $v + \tau d_i$ with $\nabla e_\lambda f(v + \tau d_i)=\nabla e_\lambda f_i(v + \tau d_i) $, which proves that $\nabla e_\lambda f_i(v) \in \partial e_\lambda f(v)$. 

To this end, we define 
$l:=|\mathcal{A}_{e_\lambda f}(v)|$
and the matrix $U \in \R^{(m+1) \times l}$ such that $U_{\cdot,i}=(\nabla e_\lambda f_i(v), -1)^\top$ for $1\leq i \leq l$.
For each $i$, let $d_i\in\R^m,~\beta_i\in\R,~\alpha_i\in\R^l$ such that $(d_i, \beta_i)^\top=U \alpha_i$.
Now choose the $\alpha_i$ as follows. Let $\gamma_i \in \R^l$ be the $i$-th unit vector. Since by assumption $U$ has full column-rank, the following linear system $U^\top U \alpha_i = -\gamma_i$ has a unique solution $\alpha_i = (U^\top U)^{-1} \gamma_i$. This implies that $-1= \langle \nabla e_\lambda f_i(v), d_i \rangle - \beta_i < \langle \nabla e_\lambda f_j(v), d_i \rangle - \beta_i = 0$, for all $j \in \mathcal{A}_{e_\lambda f}(v)\setminus \{i\}$. For $\tau > 0$ sufficiently small this means that
\begin{align}
\langle \nabla e_\lambda f_i(v), d_i \rangle + \frac{o(\tau)}{\tau} < \langle \nabla e_\lambda f_j(v), d_i \rangle + \frac{o(\tau)}{\tau}
\end{align}
 and hence
\begin{equation}
\begin{aligned}
e_\lambda f_i(v + \tau d_i) < e_\lambda f_j(v + \tau d_i),
\end{aligned}
\end{equation}
for all $j \in \mathcal{A}_{e_\lambda f}(v)\setminus \{i\}$. Thus, we verify $\mathcal{A}_{e_\lambda f}(v + \tau d_i)=\{i\}$ for $\tau\to0^+$ as desired, and the conclusion follows. 
\end{proof}

By characterizing the hypograph of $f$, i.e., $\hyp e_\lambda f:=\{(v, q) : q \leq e_\lambda f(v) \}$, in terms of nonlinear constraints $q - e_\lambda f_i(v) \leq 0$ for all $i \in \mathcal{I}$, the assumption in Lemma \ref{lem:subdiff_pointwise_min_eq} is equivalent to the LICQ applied to $\hyp e_\lambda f$ at the point $(v, e_\lambda f(v))$.


We now conclude this section with two theorems, which guarantee the stationarity of the limit points for \eqref{eq:consensus_prob_regularized} under the proposed qualification conditions. Theorem \ref{thm:stat_point_regularized_piecew_conv_1} applies to Algorithm \ref{alg:multiblock_primal_dual}, and Theorem \ref{thm:stat_point_regularized_piecew_conv_2} to Algorithm \ref{alg:proximal_penalty_method}.
\begin{thm} \label{thm:stat_point_regularized_piecew_conv_1}
Let $(u^*, z^*, y^*)$ be any limit point of the sequence $\{(u^t, z^t, y^t)\}_{t\in\N}$ produced by Algorithm \ref{alg:multiblock_primal_dual}, and $v^*:=z^*+\lambda y^*$. 
Assume 
linear independence of~~$\U_{v^*}$ and the qualification condition \eqref{eq:qualification_condition}.
Then we have
\begin{align} 
0 \in \partial e_{\lambda}f(v^*) - y^* \label{eq:opt_piecew_conv},
\end{align}
and $(u^*,v^*, y^*)$ corresponds to a critical point of the regularized problem \eqref{eq:consensus_prob_regularized}. 
\end{thm}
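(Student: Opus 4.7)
The plan is to leverage Theorem~\ref{thm:stat_point_lifted} as the starting point. From that theorem we already obtain $0 \in \partial f(z^*) - y^*$, $0 \in \partial g(u^*) + A^\top y^*$, and $Au^* - z^* - \lambda y^* = 0$. With $v^* := z^* + \lambda y^*$, the third condition immediately gives $Au^* - v^* = 0$, which is \eqref{eq:opt_regularized_3}, and the second condition is exactly \eqref{eq:opt_regularized_2}. So the entire task reduces to establishing \eqref{eq:opt_piecew_conv}, i.e., $y^* \in \partial e_\lambda f(v^*)$.

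Next, I would pull apart the generalized subgradient $y^* \in \partial f(z^*)$ using the piecewise-convex structure. By Lemma~\ref{lem:subdiff_pointwise_min_incl} there exists some active index $i^* \in \mathcal{A}_f(z^*)$ such that $y^* \in \partial f_{i^*}(z^*)$. Now I invoke the qualification condition \eqref{eq:qualification_condition} to conclude $i^* \in \mathcal{A}_{e_\lambda f}(v^*)$; this is the key place where the assumption of the theorem is used, because without it there would be no guarantee that the piece carrying $y^*$ at $z^*$ remains active at the translated point $v^*$.

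The central algebraic step is to identify $y^*$ with $\nabla e_\lambda f_{i^*}(v^*)$. Since $f_{i^*}$ is proper, convex, lsc, the relation $y^* \in \partial f_{i^*}(z^*)$ together with $v^* = z^* + \lambda y^*$ implies $z^* = P_\lambda f_{i^*}(v^*)$ by the standard resolvent identity; consequently, by the well-known formula for the gradient of the Moreau envelope of a convex function,
\begin{equation*}
\nabla e_\lambda f_{i^*}(v^*) = \frac{v^* - P_\lambda f_{i^*}(v^*)}{\lambda} = \frac{v^* - z^*}{\lambda} = y^*.
\end{equation*}

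Finally, I apply Lemma~\ref{lem:subdiff_pointwise_min_eq} using the linear independence of $\U_{v^*}$: since $i^* \in \mathcal{A}_{e_\lambda f}(v^*)$, the gradient $\nabla e_\lambda f_{i^*}(v^*)$ belongs to $\partial e_\lambda f(v^*)$. Chaining the two identifications yields $y^* \in \partial e_\lambda f(v^*)$, which is \eqref{eq:opt_piecew_conv}. The main obstacle I anticipate is a notational one rather than a conceptual one, namely keeping track of the index $i^*$ through the two different pointwise-minimum decompositions (for $f$ at $z^*$ and for $e_\lambda f$ at $v^*$); the qualification condition is precisely designed so this passage is legal, and once it is granted the proof is essentially a one-line transfer via the Moreau envelope gradient formula.
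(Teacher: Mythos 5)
Your proposal is correct and follows essentially the same route as the paper's own proof: both derive \eqref{eq:opt_regularized_2}--\eqref{eq:opt_regularized_3} from Theorem \ref{thm:stat_point_lifted}, extract an active index $i^*$ with $y^*\in\partial f_{i^*}(z^*)$ via Lemma \ref{lem:subdiff_pointwise_min_incl}, use the qualification condition \eqref{eq:qualification_condition} to keep $i^*$ active at $v^*$, identify $y^*=\nabla e_\lambda f_{i^*}(v^*)$ through the resolvent identity $z^*=P_\lambda f_{i^*}(v^*)$ and the Moreau-envelope gradient formula, and conclude with Lemma \ref{lem:subdiff_pointwise_min_eq}. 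No gaps; nothing further to add.
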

\begin{proof}
Conditions \eqref{eq:opt_regularized_2} and \eqref{eq:opt_regularized_3} follow as a direct consequence of Theorem \ref{thm:stat_point_lifted}.

From Theorem \ref{thm:stat_point_lifted} we know that $0 \in \partial f(z^*) - y^*$.
From Lemma \ref{lem:subdiff_pointwise_min_incl} we know there exists $i^* \in \mathcal{A}_f(z^*)$ such that
$y^* \in \partial f_{i^*}(z^*)$. By the definition of $v^*$ we have $v^* \in z^* + \lambda \partial f_{i^*}(z^*)$. Interpreting the inclusion as the optimality condition of the proximal mapping of the convex, proper, lsc function $f_{i^*}$, we have $z^* = P_\lambda f_{i^*}(v^*)$. By \cite[Theorem 2.26]{Rockafellar-Variational-Analysis} we have that for any $\lambda > 0$ it holds that $\nicefrac{1}{\lambda}(v^* - P_\lambda f_{i^*}(v^*)) = \nabla e_\lambda f_{i^*}(v^*)$.
Rearranging the terms shows that $0 = \nabla e_\lambda  f_{i^*}(v^*) - y^*$. In view of the assumption that $i^* \in \mathcal{A}_{e_\lambda f}(v^*)$ and $\U_{v^*}$ is linearly independent, we apply Lemma \ref{lem:subdiff_pointwise_min_eq} and obtain $y^*=\nabla e_\lambda  f_{i^*}(v^*) \in \partial e_{\lambda}f(v^*)$. This proves condition \eqref{eq:opt_piecew_conv} and concludes the proof. 
\end{proof}


Finally, we consider the case $\lambda \rho = 1$, i.e.,~when our method specializes to Algorithm \ref{alg:proximal_penalty_method}. We show in the following theorem that the active-set condition \eqref{eq:qualification_condition} is guaranteed by the algorithm, that solves the regularized problem \eqref{eq:consensus_prob_regularized} without further assumptions.
As Algorithm \ref{alg:proximal_penalty_method} produces no Lagrange multiplier, we set up the multiplier as $y^t:= \nicefrac{1}{\lambda} (Au^t - z^t)$.
\begin{thm} \label{thm:stat_point_regularized_piecew_conv_2}


Let $(u^*, z^*)$ be any limit point of the sequence $\{(u^t, z^t)\}_{t\in\N}$ produced by Algorithm \ref{alg:proximal_penalty_method}. 
Define $v^*:=Au^*$, $y^*:= \nicefrac{1}{\lambda} (Au^* - z^*)$.
Then the following statements hold true:
\begin{enumerate}
\item The qualification condition \eqref{eq:qualification_condition} is fulfilled.
\item Further assume that $\U_{v^*}$ is linearly independent.
Then $(u^*, v^*, y^*)$  is a critical point of \eqref{eq:consensus_prob_regularized}, i.e.,~conditions \eqref{eq:opt_piecew_conv}, \eqref{eq:opt_regularized_2} and \eqref{eq:opt_regularized_3} hold true.
\end{enumerate}
\end{thm}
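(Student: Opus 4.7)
The plan is to verify the three optimality conditions together with the active-set inclusion. Equation \eqref{eq:opt_regularized_3} is immediate from $v^* := Au^*$, and for \eqref{eq:opt_regularized_2} I would pass to the limit along a convergent subsequence in the optimality condition of the $u$-update in Algorithm \ref{alg:proximal_penalty_method}, using $\|u^{t+1}-u^t\| \to 0$ from Lemma \ref{lem:limit_feasibility}, the closedness of $\partial g$, and the identification $y^* = \nicefrac{1}{\lambda}(Au^* - z^*)$.

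For the qualification condition \eqref{eq:qualification_condition} in part~(1), the central step is to prove $z^* \in P_\lambda f(v^*)$. I would take the liminf along the convergent subsequence of the inequality $f(z^{t+1}) + \tfrac{1}{2\lambda}\|z^{t+1} - Au^{t+1}\|^2 \leq f(z) + \tfrac{1}{2\lambda}\|z - Au^{t+1}\|^2$ that expresses $z^{t+1} \in P_\lambda f(Au^{t+1})$. Since $f$ is a finite pointwise minimum of proper, convex, lsc functions $f_i$, it is itself lsc, so the left-hand side is bounded below in the limit by $f(z^*) + \tfrac{1}{2\lambda}\|z^* - v^*\|^2$, while the right-hand side converges by continuity. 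This gives $z^* \in P_\lambda f(v^*)$ and the identity $e_\lambda f(v^*) = f(z^*) + \tfrac{1}{2\lambda}\|z^* - v^*\|^2$. Then, for any $i \in \mathcal{A}_f(z^*)$, substituting $f(z^*) = f_i(z^*)$ and combining the bounds $e_\lambda f_i(v^*) \geq e_\lambda f(v^*)$ and $e_\lambda f_i(v^*) \leq f_i(z^*) + \tfrac{1}{2\lambda}\|z^* - v^*\|^2$ forces $e_\lambda f_i(v^*) = e_\lambda f(v^*)$, i.e., $i \in \mathcal{A}_{e_\lambda f}(v^*)$.

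For condition \eqref{eq:opt_piecew_conv} in part~(2), I would track the active piece along the $z$-update. For each $t$, choosing $i^{t+1} \in \mathcal{A}_f(z^{t+1})$ refines $z^{t+1} \in P_\lambda f(Au^{t+1})$ to $z^{t+1} = P_\lambda f_{i^{t+1}}(Au^{t+1})$ with $i^{t+1} \in \mathcal{A}_{e_\lambda f}(Au^{t+1})$, by comparing the attained value $e_\lambda f(Au^{t+1})$ with $e_\lambda f_{i^{t+1}}(Au^{t+1})$. As $\mathcal{I}$ is finite, a further subsequence makes $i^{t+1}$ constantly equal to some $i^*$. Continuity of $P_\lambda f_{i^*}$, valid because $f_{i^*}$ is proper, convex, lsc, then yields $z^* = P_\lambda f_{i^*}(v^*)$, so that $y^* = \tfrac{1}{\lambda}(v^* - z^*) = \nabla e_\lambda f_{i^*}(v^*)$ by \cite[Theorem 2.26]{Rockafellar-Variational-Analysis}. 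Combined with $i^* \in \mathcal{A}_{e_\lambda f}(v^*)$, the linear independence of $\U_{v^*}$, and Lemma \ref{lem:subdiff_pointwise_min_eq}, I obtain $y^* \in \partial e_\lambda f(v^*)$, which is \eqref{eq:opt_piecew_conv}.

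The main obstacle I anticipate is the active-set inclusion itself: a single index tracked along the algorithm only gives membership of one active piece, whereas \eqref{eq:qualification_condition} requires that \emph{every} element of $\mathcal{A}_f(z^*)$ be active for $e_\lambda f$ at $v^*$. The resolution is the stronger statement $z^* \in P_\lambda f(v^*)$ and the resulting identity $e_\lambda f(v^*) = f(z^*) + \tfrac{1}{2\lambda}\|z^* - v^*\|^2$, which transfers activeness at $z^*$ uniformly across all pieces to activeness for the envelope at $v^*$.
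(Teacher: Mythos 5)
Your proposal is correct and follows essentially the same route as the paper: both arguments reduce the qualification condition to the identity $e_\lambda f(v^*) = f(z^*) + \tfrac{1}{2\lambda}\|z^*-v^*\|^2$ followed by the same chain of inequalities transferring activeness of every $i\in\mathcal{A}_f(z^*)$, and both track a constant active index $i^*$ along a subsequence to get $z^* = P_\lambda f_{i^*}(v^*)$, $y^*=\nabla e_\lambda f_{i^*}(v^*)$, and then invoke Lemma \ref{lem:subdiff_pointwise_min_eq} for part (2). The only (harmless) difference is that you obtain $z^*\in P_\lambda f(v^*)$ directly from the lower semicontinuity of $f$ applied to the prox inequality, whereas the paper reaches the same identity via the nonexpansiveness of $P_\lambda f_{i^*}$ for the tracked convex piece together with continuity of $e_\lambda f$.
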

\begin{proof}
Define $v^{t}:=Au^t$. Let $(u^*, z^*)$ be a limit point of the sequence $\{(u^t, z^t)\}_{t\in\N}$, and $\{t_j\}_{j\in\N} \subset \{t \}_{t\in\N}$ index the corresponding convergent subsequence.
Due to the $z$-update, we have $z^{t_j} \in P_\lambda f(v^{t_j})$.
Define $z^{t_j}_i:=P_\lambda f_i(v^{t_j})$ and note that $z^{t_j}=z_{i^{t_j}}^{t_j}$ for some $i^{t_j}\in \mathcal{A}_{e_\lambda f}(v^{t_j})$.
Pick an arbitrary subsequence $\{t_{j_l}\}_{l\in\N}\subset \{t_j \}_{j\in\N}$ such that $i^{t_{j_l}}=i^*$ is constant. Then $i^* \in \mathcal{A}_{e_\lambda f}(v^{t_{j_l}})$ holds for all $l$ and, due to the continuity of $e_\lambda f$, in the limit we have $i^* \in \mathcal{A}_{e_\lambda f}(v^*)$. Furthermore, $z^{t_{j_l}} = P_\lambda f_{i^*}(v^{t_{j_l}})$ holds. Since $P_\lambda f_{i^*}$ is nonexpansive, we obtain $z^* = P_\lambda f_{i^*}(v^*)$ by letting $l\to \infty$.

Let $i^\dagger\in\mathcal{A}_f(z^*)$, i.e., $f_{i^\dagger}(z^*) = \min_{i \in \mathcal{I}} f_{i}(z^*)$. 
Then we have
\begin{align*}
e_\lambda f(v^*) &= f_{i^*}(z^*) + \frac{1}{2\lambda} \|v^*-z^*\|^2  \\
&\geq f_{i^\dagger}(z^*) + \frac{1}{2\lambda} \|v^*-z^*\|^2 \\
&\geq \min_{i \in \mathcal{I}} \min_{z\in \R^m} f_{i}(z) + \frac{1}{2\lambda} \|v^*-z\|^2 \\
&=e_\lambda f(v^*).
\end{align*}
Thus we conclude $i^\dagger\in \mathcal{A}_{e_\lambda f}(v^*)$, which proves condition \eqref{eq:qualification_condition}.

The second property follows from a 
similar argument as in the proof of Theorem \ref{thm:stat_point_regularized_piecew_conv_1}.
\end{proof}

\section{Numerical Experiments} \label{sec:numerics}
In this section, we evaluate the performances of our algorithms on optimizing the regularized model \eqref{eq:consensus_prob_regularized}, in comparison with existing methods, namely linearized ADMM \cite{chambolle2011first,lin2011linearized}, vanilla ADMM \cite{eckstein1992douglas} and PALM \cite{bolte2014proximal}. 
In particular, for both vanilla and linearized ADMM we implemented prox-updates $P_{\nicefrac{1}{\rho}}e_\lambda f$ on $e_\lambda f$, and for our algorithms and PALM we rather implemented prox-updates $P_{\nicefrac{1}{\rho}}f$ on $f$.
We show that our methods consistently behave favorably in terms of lower objective value and vanishing optimality gap, in the tasks of robust linear regression and joint feature selection and semi-supervised learning with linear classifiers. In view of \eqref{eq:opt_piecew_conv}, \eqref{eq:opt_regularized_2} and \eqref{eq:opt_regularized_3}, the optimality gap is defined as
\begin{equation} \notag
\begin{aligned}
\mathrm{gap} &:= \mathrm{dist}(0, \partial e_\lambda f(v^*)-y^*) \\
&\qquad+ \mathrm{dist}(0, \partial g(u^*)+A^\top y^*) + \|Au^* - v^*\|.
\end{aligned}
\end{equation}
Due to Lemma \ref{lem:subdiff_pointwise_min_eq} and the relation between $\nabla e_\lambda f_i$ and $P_\lambda f_i$ \cite[Theorem 2.26]{Rockafellar-Variational-Analysis}, computing an optimality gap is convenient.

\begin{figure}[!htb]
\centering
\begin{subfigure}[b]{0.49\linewidth}
        \centering
        \includegraphics[width=\textwidth]{./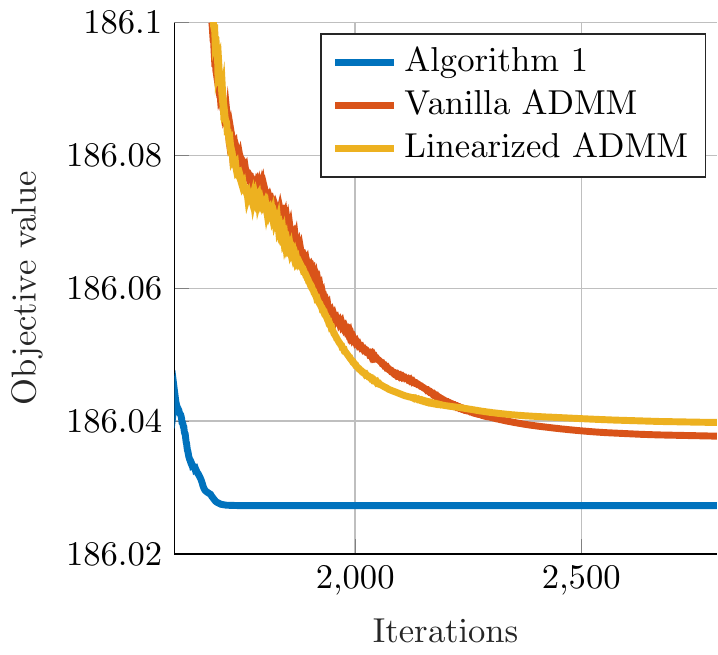}
\end{subfigure}
\begin{subfigure}[b]{0.49\linewidth}
        \centering
        \includegraphics[width=\textwidth]{./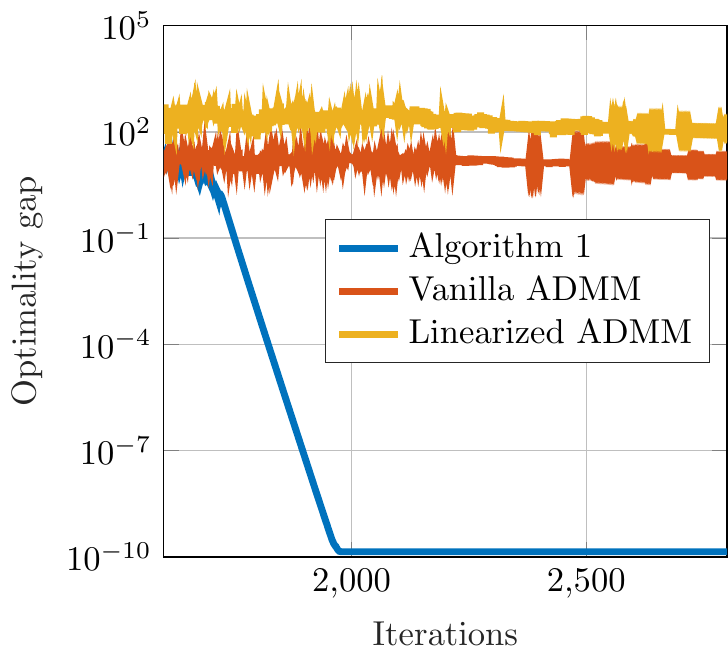}
\end{subfigure} \\
\begin{subfigure}[b]{0.49\linewidth}
        \centering
        \includegraphics[width=\textwidth]{./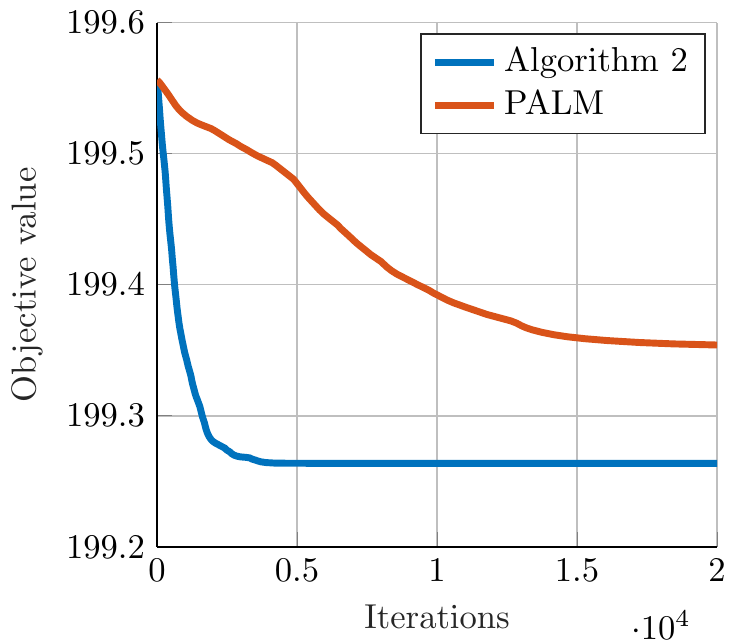}
\end{subfigure}
\begin{subfigure}[b]{0.49\linewidth}
        \centering
        \includegraphics[width=\textwidth]{./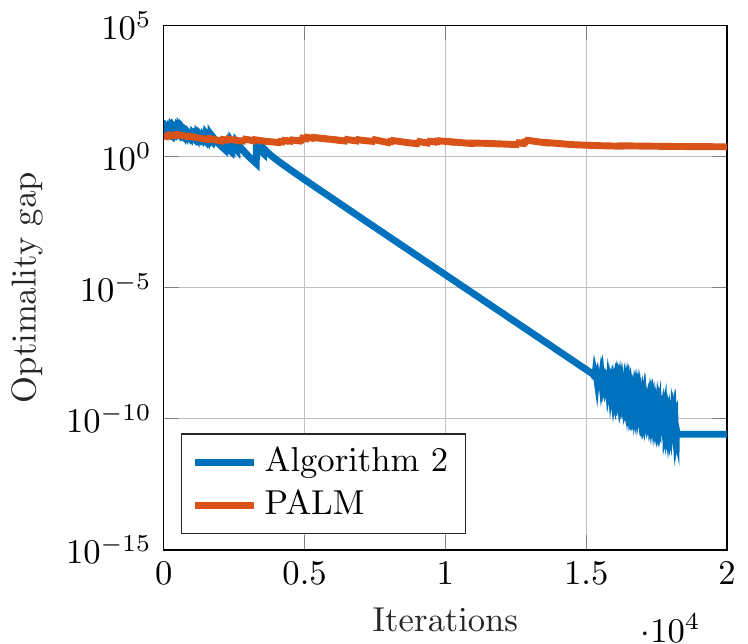}
\end{subfigure}
\caption{Experimental comparison on robust regression. Upper row: Comparison of primal-dual type methods (linearized ADMM \cite{chambolle2011first,lin2011linearized}, vanilla ADMM \cite{eckstein1992douglas} and Algorithm \ref{alg:multiblock_primal_dual}). Lower row: Comparison of primal methods (PALM \cite{bolte2014proximal} and Algorithm \ref{alg:proximal_penalty_method}). It is observed that our algorithms converge to critical points of \eqref{eq:consensus_prob_regularized} with low objective value. Both ADMM and PALM fail to converge to a critical point of \eqref{eq:consensus_prob_regularized}.}
\label{fig:robust_regression}
\end{figure}

{\renewcommand\arraystretch{0.8}
\renewcommand\tabcolsep{4.3pt}
\begin{table*}[]
\caption{Comparison of linearized ADMM \cite{chambolle2011first,lin2011linearized}, and Algorithm \ref{alg:multiblock_primal_dual} on joint feature selection and semi-supervised learning. The results are consistent with the previous experiment. Note that for the supervised case, due to the convexity and smoothness of $e_\lambda f$, linearized ADMM and our algorithm converge to the same solution.}
\center
{\scriptsize
\begin{tabular}{ccccccccccc}
\toprule
& \multicolumn{5}{c}{linearized ADMM} & \multicolumn{5}{c}{Algorithm \ref{alg:multiblock_primal_dual}} \\
\cmidrule(lr){2-6} \cmidrule(lr){7-11} 
$\nicefrac{l}{N}$ & Test Error & Objective & $\nicefrac{\|u\|_0}{d}$& Iterations & Gap & Test Error & Objective & $\nicefrac{\|u\|_0}{d}$ & Iterations & Gap \\
\midrule
0.10 \%  & 1.00 \% & 138.62 & 1.00 \% & 60000 & 12 & 0.95 \% & $\mathbf{107.38}$ & 0.60 \% & 54518 & $\mathbf{1.2\cdot 10 ^{-5}}$ \\[0.5ex]
0.50 \%  & 1.00 \% & 138.29 & 0.40 \% & 60000 & 10 & 0.97 \% & $\mathbf{109.27}$ & 0.40 \% & 49880 & $\mathbf{1.1\cdot 10 ^{-5}}$  \\[0.5ex]
5.00 \%  & 1.05 \% & 132.57 & 2.59 \% & 60000 & 10 & 1.02 \% & $\mathbf{105.80}$ & 2.79 \% & 50313 & $\mathbf{1.3\cdot 10 ^{-5}}$  \\[0.5ex]
7.00 \%  & 1.10 \% & 127.23 & 3.59 \% & 60000 & 9.5 & 1.12 \% & $\mathbf{101.10}$ & 2.99 \% & 54876 & $\mathbf{1.2\cdot 10^{-5}}$  \\[0.5ex]
9.00 \%  & 1.05 \% & 127.52 & 4.38 \% & 60000 & 9.3 & 1.12 \% & $\mathbf{101.14}$ & 3.98 \% & 53439 & $\mathbf{1.2\cdot 10 ^{-5}}$  \\[0.5ex]
10.00 \%  & 1.12 \% & 128.95 & 4.38 \% & 60000 & 8.9 & 1.07 \% & $\mathbf{103.72}$ & 3.98 \% & 49452 & $\mathbf{1.1\cdot 10 ^{-5}}$  \\[0.5ex]
100.00 \%  & 1.40 \% & $\mathbf{116.34}$ & 47.61 \% & 56815 & $\mathbf{1.5 \cdot 10 ^{-5}}$ & 1.40 \% & $\mathbf{116.34}$ & 47.61 \% & 56815 & $\mathbf{1.5\cdot 10 ^{-5}}$\\[0.5ex]
\bottomrule
\end{tabular}
}
\label{tab:semi_supervised_learning}
\end{table*}}

\subsection{Robust Linear Regression}
In linear regression one is interested in reconstructing a signal $u \in \R^n$ from noisy measurements $b \in \R^m$. The forward model takes the form of
\begin{align*}
b=Au + \epsilon,
\end{align*}
where $A \in \R^{m \times n}$ describes the linear sampling
and
$\epsilon\in\R^m$ is a disturbance term.

Instead of plain least squares, we use the truncated quadratic loss $e_\lambda f_i(v_i)=\min\,\{\nu, \frac{1}{2\lambda}v_i^2\}$, which is more robust against outliers \cite{she2011outlier,LiuMinimizingTQ}. We set $g(u)=0$ and the data term $e_\lambda f(v)=\sum_{i=1}^n e_\lambda f_i(v_i)$ with $f_i(v):= \nu \|v_i\|_0$ chosen as the $\ell_0$-norm.

We benchmark linearized ADMM \cite{chambolle2011first,lin2011linearized}, vanilla ADMM \cite{eckstein1992douglas}, PALM \cite{bolte2014proximal} and Algorithms \ref{alg:multiblock_primal_dual} and \ref{alg:proximal_penalty_method} on synthetic data. The entries of $A\in \R^{20000 \times 10}$ and $b\in \R^{20000}$ are i.i.d.~and normal distributed. We further degrade $b$ with additive Gaussian and high impulsive noise by adding a large constant to $60\%$ of the entries. We manually choose $\lambda=0.05$ and $\nu=0.01$. For both ADMM and our proposed Algorithm \ref{alg:multiblock_primal_dual}, a warmup phase is launched, where the parameter $\rho$ is initialized with a small value and then grows exponentially along iterations up to a value slightly bigger than $\nicefrac{1}{\lambda}=20$. In practice, such configuration often leads to lower objective values. To enforce convergence of vanilla and linearized ADMM, we keep increasing $\rho$ until $\rho=8000\gg20$.
Yet, as shown in Figure \ref{fig:robust_regression}, both ADMM and PALM fail to converge to a critical point of \eqref{eq:consensus_prob_regularized} evidenced by a non-vanishing optimality gap. In contrast, our Algorithm finds critical points with lower objective values.

\subsection{Joint Semi-supervised Learning and Feature Selection}
To further demonstrate versatility of our model, we consider the problem of joint feature selection \cite{bi2003dimensionality} and semi-supervised learning with linear classifiers. We use a Huberized semi-supervised SVM model \cite{bennett1999semi,joachims1999transductive,collobert2006large} and a nonconvex, sparsity-promoting regularizer on the classifier. The overall task is to learn a classifier from both labeled and unlabeled examples and, simultaneously, select the features.

We set up the individual components of model \eqref{eq:consensus_prob_regularized} as follows. Let $N$ be the number of training examples, among which $l\leq N$ examples are labeled and the rest is unlabeled. Let $X \in \R^{N \times d}$ be the feature matrix and $u\in \R^d$ the linear classifier. We introduce a linear constraint $X u = v$. We model the term $e_\lambda f$ as the sum of two terms: the first corresponds to the $N-l$ unlabeled training examples; the second term corresponds to the the labeled examples. More explicitly, the data term reads
\begin{align} \notag
e_\lambda f(v)=  \sum_{i=1}^{N-l} e_\lambda f_i(v_i) + \sum_{j=N-l+1}^{N} e_\lambda f_j(v_j).
\end{align}
For $i \leq N-l$ and $f_i(v_i)=\min_{\theta_i \in\{-1,1\}} (1-v_i\theta_i)_+$, each summand $e_\lambda f_i(v_i)$ is a ``symmetric Huberized hinge loss'' term, whose shape is depicted in Figure~\ref{fig:moreau_envelope}. For $j > N-l$, the label $\theta_j \in\{-1,1\}$ is fixed and $e_\lambda f_j(v_j)$ is the plain Huberized hinge loss for $f_j(v_j)=(1-v_j\theta_j)_+$.

For feature selection, we promote sparsity on the classifier $u$ by the $\ell_0$-norm regularization. We also include a squared $\ell_2$ norm, $\beta \|u\|^2$, to control the margin in the SVM model and ensure the coercivity of the model. Altogether, the regularization term $g$ is set up as $g(u)=\alpha \|u\|_{0}+ \beta \|u\|^2$.
To avoid extremal solutions, we fix the bias in the SVM model to the empirical mean of the data, which amounts to solving the $\mathit{NP}$-hard Furthest Hyperplane Problem (FHP) \cite{karnin2012unsupervised}.

In Table \ref{tab:semi_supervised_learning} we benchmark linearized ADMM \cite{chambolle2011first,lin2011linearized} vs. Algorithm \ref{alg:multiblock_primal_dual} on synthetic data ($N=12000$ examples) that is not linearly separable and degraded by $d=500$ additional feature components containing noise. We manually choose $\alpha=0.025$, $\beta=0.416$, $\lambda=0.5$. We stop the algorithm, when the difference of consecutive iterates is below a threshold $\epsilon=10^{-7}$ or the maximum number of $60000$ iterations is reached. The results are consistent with the previous example: Linearized ADMM does not converge to a critical point within the maximum number of iterations, except for the supervised case $l=N$. In this case, Algorithm \ref{alg:multiblock_primal_dual} and linearized ADMM converge to the same result, due to the convexity of $f$ resp. smoothness of $e_\lambda f$. 

\section{Conclusion}
In this work we have tackled highly nonconvex Moreau-Yosida regularized composite problems, where both terms in the objective are nonsmooth. Classical proximal splitting algorithms such as nonconvex ADMM fail to converge for this problem class. To overcome this limitation, we devised a novel primal-dual proximal splitting algorithm that intrinsically regularizes the behavior of the dual variable.
For piecewise convex functions, we derived explicit qualification conditions that guarantee convergence to a critical point of the Moreau-Yosida regularized problem. We validated our method on the optimization of challenging highly nonconvex machine learning objectives. For future work we will address a randomized variant of our algorithm suited to distributed computation in large-scale machine learning.

\subsubsection*{Acknowledgements} 
We would like to thank Matthias Vestner and Thomas M\"ollenhoff for fruitful discussions and helpful comments.
We gratefully acknowledge the support of the ERC Consolidator Grant 3D Reloaded.

{\small
\bibliographystyle{plain}
\bibliography{references}
}

\newpage
\appendix
\section{Proofs}
\subsection{Proof of Lemma \ref{lem:boundedness_suff_desc}}
\begin{proof}
(Statements 1 \& 2)
To show the lower boundedness of $\mathfrak{Q}_{\rho}(u^{t+1}, z^{t+1}, y^{t+1})$ we rewrite
\begin{align*}
&\mathfrak{Q}_{\rho}(u^{t+1}, z^{t+1}, y^{t+1})=
f(z^{t+1}) +g(u^{t+1})  \\
&\qquad+ \frac{\rho}{2} \| A u^{t+1} - z^{t+1}- \lambda y^{t+1}\|^2 \\
&\qquad+ \frac{1}{2\lambda}\|A u^{t+1} - z^{t+1}\|^2 \\
&\qquad- \frac{1}{2\lambda} \| A u^{t+1} - z^{t+1}- \lambda y^{t+1}\|^2.
\end{align*}

Since $\rho > \frac{1}{\lambda}$ we can further bound $\mathfrak{Q}_{\rho}(u^{t+1}, z^{t+1}, y^{t+1})$ from below by the quadratic penalty $Q(u^{t+1}, z^{t+1})$ in \eqref{eq:penalty_function}:
\begin{align*}
&\mathfrak{Q}_{\rho}(u^{t+1}, z^{t+1}, y^{t+1})\geq Q(u^{t+1}, z^{t+1}).
\end{align*}
We further bound $Q(u^{t+1}, z^{t+1})$:
\begin{align*}
Q(u^{t+1}, z^{t+1})&\geq e_{\lambda} f(A u^{t+1}) + g(u^{t+1}),
\end{align*}
which is bounded from below. \\
(Statement 3)
We find an estimate for $\mathfrak{Q}_{\rho}(u^{t+1}, z^{t}, y^{t})- \mathfrak{Q}_{\rho}(u^{t}, z^{t}, y^{t})$. 
By the definition of $u^{t+1}$ as the global minimum of $\mathfrak{Q}_{\rho}(\cdot, z^{t}, y^{t}) + \frac{1}{2}\|(\cdot)-u^t\|_M^2$ and $M:=\frac{1}{\sigma}I - \rho A^\top A$ positive definite for $\sigma\rho\|A\|^2<1$, we have the estimate
\begin{equation*}
  \begin{aligned}
\mathfrak{Q}_{\rho}(u^{t+1}, z^{t}, y^{t}) + \frac{1}{2}\|u^{t+1} -u^t\|_M^2 \leq \mathfrak{Q}_{\rho}(u^{t}, z^{t}, y^{t}).
  \end{aligned}
\end{equation*}
We bound $\frac{1}{2}\|u^{t+1} -u^t\|_M^2$,
\begin{equation*}
  \begin{aligned}
\|u^{t+1} -u^t\|_M^2 &= \langle u^{t+1}- u^t,M (u^{t+1}-u^t )\rangle \\
&=\frac{1}{\sigma }\|u^{t+1}-u^t\|^2 - \rho \|A u^{t+1}- Au^t\|^2\\
&\geq \left(\frac{1}{\sigma} -\rho\|A\|^2 \right)\|u^{t+1}-u^t\|^2.
  \end{aligned}
\end{equation*}
This yields the estimate
\begin{equation}
  \begin{aligned}
  &\mathfrak{Q}_{\rho}(u^{t+1}, z^{t}, y^{t})- \mathfrak{Q}_{\rho}(u^{t}, z^{t}, y^{t}) \\
  &\qquad\leq \left( \frac{\rho \|A\|^2}{2} - \frac{1}{2 \sigma} \right) \|u^{t+1} - u^t\|^2, \label{eq:qdu}
  \end{aligned}
\end{equation}
which leads to a sufficient descent if $\sigma \rho \|A\|^2 < 1$.
The optimality for the $z$-update guarantees
  \begin{equation}
  \begin{aligned}
	\mathfrak{Q}_{\rho}(u^{t+1}, z^{t+1}, y^{t})- \mathfrak{Q}_{\rho}(u^{t+1}, z^{t}, y^{t}) \leq 0. \label{eq:qdz}
  \end{aligned}
\end{equation}

Finally we bound the term
  \begin{equation*}
  \begin{aligned}
&\mathfrak{Q}_{\rho}(u^{t+1}, z^{t+1}, y^{t+1})- \mathfrak{Q}_{\rho}(u^{t+1}, z^{t+1}, y^{t}) = -\frac{\lambda}{2} \|y^{t+1}\|^2 \\
&\qquad+ \frac{\lambda}{2} \|y^{t}\|^2 +\langle A u^{t+1} - z^{t+1}, y^{t+1} -y^t\rangle \\
&\qquad+\frac{\rho}{2}\|Au^{t+1} - z^{t+1} - \lambda y^{t+1}\|^2 \\
&\qquad- \frac{\rho}{2}\|Au^{t+1} - z^{t+1} - \lambda y^{t}\|^2.
  \end{aligned}
\end{equation*}
Since $\frac{1}{\rho}(y^{t+1} - y^t)+\lambda y^{t+1}=Au^{t+1} - z^{t+1}$, we can rewrite
\begin{align*}
&-\frac{\lambda}{2} \|y^{t+1}\|^2 + \frac{\lambda}{2} \|y^{t}\|^2 + \langle A u^{t+1} - z^{t+1}, y^{t+1} -y^t\rangle \\
&=-\frac{\lambda}{2} \|y^{t+1}\|^2 + \frac{\lambda}{2} \|y^{t}\|^2 + \frac{1}{\rho} \|y^{t+1} - y^t\|^2 + \lambda \|y^{t+1}\|^2\\
&\qquad  -\lambda \langle y^{t+1}, y^t \rangle \\
&=\frac{\lambda}{2} \|y^{t+1}\|^2 -\lambda \langle y^{t+1}, y^t \rangle + \frac{\lambda}{2} \|y^{t}\|^2 + \frac{1}{\rho} \|y^{t+1} - y^t\|^2 \\
&=\left( \frac{1}{\rho} +\frac{\lambda}{2} \right) \|y^{t+1} - y^t\|^2.
\end{align*}
We apply the identity $\|a+c\|^2 - \|b+c\|^2  = -\|b-a\|^2 + 2\langle a+c, a-b \rangle$ with $a:=- \lambda y^{t+1}$, $b:=- \lambda y^{t}$ and $c:=A u^{t+1}-z^{t+1}$ and obtain
\begin{align*}
&\frac{\rho}{2}\|Au^{t+1} - z^{t+1} - \lambda y^{t+1}\|^2 - \frac{\rho}{2}\|Au^{t+1} - z^{t+1} - \lambda y^{t}\|^2 \\
&= -\frac{\rho\lambda^2}{2} \|y^{t+1} - y^t\|^2 \\
&\qquad- \lambda\rho \langle A u^{t+1}-z^{t+1} - \lambda y^{t+1}, y^{t+1} -y^t\rangle \\
&= -\frac{\rho\lambda^2 + 2 \lambda}{2} \|y^{t+1} - y^t\|^2.
\end{align*}
Overall we have:
 \begin{equation}
  \begin{aligned}
&\mathfrak{Q}_{\rho}(u^{t+1}, z^{t+1}, y^{t+1})- \mathfrak{Q}_{\rho}(u^{t+1}, z^{t+1}, y^{t}) \\
&\qquad= \left(\frac{1}{\rho} - \frac{\rho\lambda^2 + \lambda}{2}\right) \|y^{t+1}-y^{t}\|^2. \label{eq:qdy}
  \end{aligned}
\end{equation}
Summing \eqref{eq:qdu}--\eqref{eq:qdy}, we obtain the desired result:
\begin{equation}
\label{eq:suff_descent}
\begin{aligned}
  & \mathcal{Q}_{\rho}(u^{t+1}, z^{t+1}, y^{t+1})- \mathcal{Q}_{\rho}(u^{t}, z^{t}, y^{t}) \\
  & \quad\leq \left( \frac{\rho \|A\|^2}{2} - \frac{1}{2 \sigma} \right)\|u^{t+1}-u^t\|^2 \\ 
  &\quad \quad +\left(\frac{1}{\rho}-\frac{\rho\lambda^2 + \lambda}{2} \right) \|y^{t+1}-y^{t}\|^2.
\end{aligned}
\end{equation}
\end{proof}

\subsection{Proof of Lemma \ref{lem:limit_feasibility}}
\begin{proof}
Since $\{\mathfrak{Q}_{\rho}(u^{t}, z^{t}, y^{t})\}_{t\in\mathbb{N}}$ monotonically decreases by Lemma \ref{lem:boundedness_suff_desc}, it is bounded from above. Since $\{Q(u^t,z^t)\}_{t\in\mathbb{N}}$ is bounded from above by $\{\mathfrak{Q}_{\rho}(u^{t}, z^{t}, y^{t})\}_{t\in\mathbb{N}}$ and, furthermore, $Q$ is coercive by assumption, we assert that $\{u^t\}_{t\in\mathbb{N}}$, $\{z^t\}_{t\in\mathbb{N}}$ are uniformly bounded.

Now we sum the estimate \eqref{eq:suff_descent} from $t=1$ to $T$ and obtain due to the lower boundedness of the iterates $\mathfrak{Q}_{\rho}(u^{t}, z^{t}, y^{t})$:
\begin{align*}
- \infty &< \mathfrak{Q}_{\rho}(u^{T+1}, z^{T+1}, y^{T+1})- \mathfrak{Q}_{\rho}(u^{1},z^{1}, y^1) \\
&\leq  \left( \frac{\rho \|A\|^2 }{2} - \frac{1}{2 \sigma} \right) \sum_{t=1}^T\|u^{t+1}-u^t\|^2 \\
  &\quad +\left(\frac{1}{\rho} - \frac{\rho\lambda^2 + \lambda}{2}\right) \sum_{t=1}^T \|y^{t+1}-y^{t}\|^2.  
\end{align*}
Passing $T \to \infty$ yields that $\|u^{t+1}-u^t\| \to 0$ and $\| y^{t+1} - y^t \| \to 0$ for $\rho>\nicefrac{1}{\lambda}$ and $\sigma\rho\|A\|^2<1$.
From $\frac{1}{\rho}(y^{t+1}-y^t) =A u^{t+1} -z^{t+1} - \lambda y^{t+1}$ we have that, 
\begin{align*}
	0 &\leq \|z^t-z^{t+1}\| \\
	&= \|z^t - z^{t+1} + A(u^{t+1}-u^{t}) - A(u^{t+1}-u^{t})\\
	&\qquad +\lambda y^{t+1} - \lambda y^t - \lambda y^{t+1} + \lambda y^t \| \\
	&\leq \frac{1}{\rho} \| y^{t+1} - y^t \| + \|A\|\|u^{t+1}-u^{t}\| \\
	&\qquad +\lambda \|y^{t+1} - y^t  \| \to 0,
\end{align*}
and that $\|A u^{t} - z^t - \lambda y^t\| \to 0$. Since $\{u^t\}_{t\in\mathbb{N}}$, $\{z^t\}_{t\in\mathbb{N}}$ are uniformly bounded, also $\{y^t\}_{t\in\mathbb{N}}$ are uniformly bounded.
\end{proof}

\end{document}